\documentclass[12pt]{article}
\usepackage{fullpage}
\usepackage{amssymb,amsmath,amsfonts,amsthm,latexsym,enumerate,url,cases}
\numberwithin{equation}{section}
\usepackage{hyperref}
\usepackage{indentfirst}
\hypersetup{colorlinks=true,citecolor=blue,linkcolor=blue,urlcolor=blue}

\newtheorem{theorem}{Theorem}[section] %
\newtheorem{lemma}[theorem]{Lemma} %

\newtheorem*{problem1}{Problem 1}%
\newtheorem*{problem2}{Problem 2}%
\newtheorem{remark1}{Remark}%

\newtheorem*{theoremA}{Theorem A}
\newtheorem*{theoremB}{Theorem B}
\newtheorem*{theoremC}{Theorem C}
\newtheorem*{theoremD}{Theorem D}
\newtheorem*{theoremE}{Theorem E}
\newtheorem*{theoremF}{Theorem F}
\newtheorem*{theoremG}{Theorem G}

\usepackage{cite}

\begin{document}
\title{On a problem of minimal additive complements for not eventually periodic $S$-difference sets}
\author{
 Min Tang\footnote{Corresponding author. This work is supported by the National Natural Science Foundation of China (Grant No. 12371003). E-mail: tmzzz2000@163.com (M. Tang), hwj021729@163.com (W.J. He).} and Wenjing He\\
\small  School of Mathematics and Statistics \\
\small  Anhui Normal University,  Wuhu  241002,  P. R. China\\
}
\date{}
\maketitle \baselineskip 14pt

{\bf Abstract.} Let $C$ and $W$ be two integer sets. If $C+W=\mathbb{Z}$, then we say that $C$ is an additive complement to $W$. If no proper subset of $C$ is an additive complement to $W$, then we say that $C$ is a minimal additive complement to $W$. In this paper, we give an affirmative answer to one problem of Ma and Chen [On a problem of minimal additive complements of integers, J. Number Theory 284(2026), 178-187.]

{\bf 2020 Mathematics Subject Classification:} 11B13

{\bf Keywords:} Additive complements; minimal additive complements; $S$-difference set

\section{Introduction}

 For $C,W\subseteq \mathbb{Z}$ and integer $a$, we define
$$C+W= \{c+w:c\in C,~w\in W\},$$ $$a+W=\{a+w:w\in W\},\quad a W=\{aw:w\in W\}.$$
For integers $a$ and $b$, $[a, b]$ is defined as the set of all integers between $a$ and $b$.
For a positive integer $m$, let $[m]$ denote the set $\{0, 1, \ldots, m-1\}$.
For any set $A \subseteq \mathbb{Z}$, let
$$[A]_m = \{r : r \in [m], r \equiv a \pmod{m}, a \in A\}.$$
Let $V = \{v_1 < v_2 < \cdots\}$ be an infinite set. Suppose that $[V]_m = \{r_1, \dots, r_s\}$. For $1 \leqslant i \leqslant s$, let
$$V_i = \{v \in V : v \equiv r_i \pmod m\} = \{v_{i_1} < v_{i_2} < \cdots\}.$$
We call set $V$ has {\it the big-hole property} modulo $m$ if for every $1 \leqslant i \leqslant s$, $V_i$
is an infinite set and $$\limsup_{k \to \infty} (v_{i_k} - v_{i_k-1}) = +\infty, \quad \limsup_{k \to \infty} (v_{i_k+1} - v_{i_k}) = +\infty,$$
where $v_{i_k} \in V_i$ and $v_{i_k-1}, v_{i_k+1}$ may not be in $V_i$.

If there exists a positive integer $T$ such that $x+T\in X$ for all sufficiently large integers $x\in X$, then we call $X$ eventually periodic with period $T$.
The set $C$ is called an additive complement to $W$ if $C+W=\mathbb{Z}$ and if no proper subset of $C$ is an additive complement to $W$, then the set $C$ is called a minimal additive complement to $W$.

In 2011, Nathanson \cite{N2011} initiated the study of minimal complements in infinite abelian groups. He proved that every complement to $W$ contains a minimal complement when $W$ is a finite set of integers. For infinite integers set $W$, he posed the following problem:

\begin{problem1}\label{prob1}(\cite[Problem 11]{N2011}). Let $W$ be an infinite set of integers. Does there exist a minimal complement to $W$? Does there exist a complement to $W$ that does not contain a minimal complement?
\end{problem1}

 In 2012, Chen and Yang \cite{CY2012} were the first to study the existence of minimal complements for infinite sets $W$.
\begin{theoremA}(See \cite[Theorem 1]{CY2012}.) Let $W$ be a set of integers with $\inf W=-\infty$ and $\sup W=+\infty$. Then there exists a minimal complement to $W$.
\end{theoremA}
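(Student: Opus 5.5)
Since $\inf W=-\infty$ and $\sup W=+\infty$, I will build a minimal complement $C$ directly by a greedy, inductive construction. Enumerate $\mathbb{Z}$ as $0,1,-1,2,-2,\dots$, say $z_1,z_2,\dots$. The invariant I will maintain is that each chosen element $c_k\in C$ comes with a witness integer $n_k$ such that $n_k\in c_k+W$ but $n_k\notin c_j+W$ for every other $j$ (whether chosen earlier or later). If this invariant holds, $C$ is automatically minimal: removing $c_k$ leaves $n_k$ uncovered. The only remaining requirement is $C+W=\mathbb{Z}$, which I will secure by making sure each $z_i$ is covered at some finite stage.

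Step $k$ runs as follows. Let $C_{k-1}=\{c_1,\dots,c_{k-1}\}$ be the elements chosen so far, and let $z$ be the first $z_i$ not yet in $C_{k-1}+W$. If there is no such $z$ among the first $k$ terms, skip the step. Otherwise I choose $w\in W$ with $|w|$ huge and put $c_k=z-w$, so $z=c_k+w$ is covered and I declare $n_k=z$. The sign of $w$ is free because $W$ is unbounded in both directions. That freedom is exactly what lets me place $c_k$ far away from all previous $c_j$ and from all previous witnesses. I then have to check two conditions.

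\emph{Old witnesses survive.} I need $n_j\notin c_k+W$ for every $j<k$, i.e.\ $n_j-z+w\notin W$ for finitely many fixed shifts $n_j-z$. This is the main obstacle, since $W$ may contain long intervals or have positive density, and then no choice of $w$ avoids $W-(\text{shift})$. My first attempt would be to choose the witnesses differently: instead of demanding $n_j\notin c_k+W$ afterwards, I would make $c_k$ so extreme that $c_k+W$ misses a prescribed finite set. That cannot work for two-sided unbounded $W$, because $c_k+W$ always hits everything eventually.

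I would therefore switch to the Chen--Yang style argument. Put $C_0=\{\,c: \text{cover greedily}\,\}$, then use Zorn-type reasoning on the local finiteness: for each $n$, the set $\{c: n\in c+W\}$ is infinite in general, so Zorn fails directly. The alternative is to construct $C$ sparse with gaps growing so fast that for each $n$ only finitely many $c\in C$ have $n-c\in W\cap[-R,R]$, and then remove redundant elements in order. Concretely, I would take $C=\{c_1,c_2,\dots\}$ with $|c_k|$ increasing rapidly, each $c_k$ chosen to cover $z$ using a $w$ of size about $|c_k|$. After that I would delete elements one at a time from $C$ (in the order $c_1,c_2,\dots$) whenever the remaining set is still a complement. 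The key check is that the limit set is still a complement. For a fixed $n$, the representations $n=c+w$ with $c$ surviving must stabilize, and I would guarantee this by ensuring that each $z_i$ has a representation using a $c_k$ that is never deleted, namely the last needed one. Verifying that stabilization is where I expect the real work to lie.
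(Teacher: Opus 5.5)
The paper only quotes Theorem~A from Chen and Yang and contains no proof of it, so I judge your proposal on its own. It has a genuine gap, and the gap is exactly where you say the ``real work'' lies.

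Deleting elements of $C$ in order is guaranteed to leave a complement only when every $n$ has finitely many representations $n=c+w$ with $c\in C$, $w\in W$. Without that, each of the infinitely many representers of $n$ may be deleted in its turn, because some later representer still covers $n$ at that moment. Then $n$ ends up uncovered. ``The last needed one'' is undefined precisely in this situation, so the stabilization you want is assumed rather than proved.

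Nothing in your construction supplies this local finiteness. The condition that only finitely many $c\in C$ satisfy $n-c\in W\cap[-R,R]$ holds for every $C$, since $W\cap[-R,R]$ is finite. Making $|c_k|$ grow fast with $|w|\approx|c_k|$ does not prevent $n-c_k\in W$ for infinitely many $k$ when $W$ is dense or contains long intervals.

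The missing idea is a dichotomy on the gaps of $W$. With it, your \emph{first} approach goes through, and your claim that $c_k+W$ cannot be made to miss a prescribed finite set is false.

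\emph{Bounded gaps.} If consecutive elements of $W$ differ by at most $g$, then every $n-W$ meets $\{0,\dots,g-1\}$. So this finite set is a complement, and it contains a minimal one.

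\emph{Unbounded gaps.} Since $\inf W=-\infty$ and $\sup W=+\infty$, every maximal gap $[a,b]$ of $W$ is finite with $a-1,b+1\in W$, and such gaps exist of arbitrarily large length. At step $k$, let $z$ be the first uncovered integer in the order $0,1,-1,2,-2,\dots$. Pick a maximal gap with $b-a+1\geqslant 2|z|$. Set $w=b+1$ if $z\geqslant 0$ and $w=a-1$ if $z<0$, and put $c_k=z-w$. Then $c_k+[a,b]\supseteq[-|z|,|z|]\setminus\{z\}$, so $(c_k+W)\cap[-|z|,|z|]=\{z\}$.

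All earlier witnesses precede $z$ in the enumeration, so they lie in $[-|z|,|z|]\setminus\{z\}$ and are not covered by $c_k$. Every later first-uncovered integer $z'$ satisfies $|z'|\geqslant|z|$ and $z'\neq z$, so $z$ is not covered by the corresponding later element either. Hence the witness $n_k=z$ is covered by $c_k$ and by no other element of $C$. Also $C+W=\mathbb{Z}$, because each step covers the first uncovered integer.

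This gives minimality directly, with no deletion or Zorn step. The resulting $C$ is also locally finite, which is what your deletion argument would have required.
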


\begin{theoremB} (See \cite[Theorem 2]{CY2012}.) Let $W=\{1=w_1<w_2<\cdots\}$ be a set of integers and $\overline{W}=(\mathbb{Z}\cap (0,+\infty))\setminus W=\{\overline{w_1}<\overline{w_2}<\cdots\}$.\\
(a) If $\lim\sup_{i\rightarrow+\infty}(w_{i+1}-w_i)=+\infty$, then there exists a minimal complement to $W$.\\
(b) If $\lim_{i\rightarrow+\infty}(\overline{w_{i+1}}-\overline{w_i})=+\infty$, then there does not exist a minimal complement to $W$.
\end{theoremB}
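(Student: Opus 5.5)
We treat the two parts separately. Part (b) is a short contradiction argument built on the ``private point'' characterisation of minimality. Part (a) needs an inductive construction.

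\emph{Part (b).} We use the standard characterisation: $C$ is a minimal complement to $W$ iff $C+W=\mathbb{Z}$ and every $c\in C$ has a private point $n_c$, i.e.\ $n_c-c\in W$ but $n_c-c'\notin W$ for all $c'\in C\setminus\{c\}$. Suppose $C$ is a minimal complement.
\begin{enumerate}[(i)]
\item Since $W\subseteq[1,+\infty)$, covering an integer $n$ requires some $c\le n-1$. Hence $C$ is unbounded below.
\item Fix $c\in C$. For every $c'\in C$ with $c'<n_c-1$, the integer $n_c-c'$ is positive. By privacy it is not in $W$, so it lies in $\overline{W}$.
\item Choose two distinct elements $a,b\in C$. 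Their private points are distinct, since $n_a$ is covered by $a$ alone. Put $t=n_a-n_b\neq 0$.
\item Let $S=\{n_b-c': c'\in C\}$. By (i) and (ii), $S\cap(K,+\infty)$ and $(S+t)\cap(K,+\infty)$ are both infinite subsets of $\overline{W}$ for some $K$.
\item So for arbitrarily large $s\in S$, both $s$ and $s+t$ lie in $\overline{W}$. Since $|t|\ge 1$ is fixed, consecutive elements of $\overline{W}$ at distance at most $|t|$ occur arbitrarily far out.
\end{enumerate}
This contradicts $\overline{w_{i+1}}-\overline{w_i}\to+\infty$, so no minimal complement exists.

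\emph{Part (a).} We build $C=\{c_1,c_2,\dots\}$ step by step. At each step we also fix a finite set $P_k$ of reserved private points $p_j=n_{c_j}$, $j\le k$. We maintain two invariants:
\begin{itemize}
\item each $p_j$ is covered only by $c_j$ among $c_1,\dots,c_k$;
\item a prescribed finite window of integers is already covered.
\end{itemize}
Enumerate $\mathbb{Z}$ as $0,-1,1,-2,2,\dots$. At step $k+1$, let $n$ be the first integer not yet covered by $\{c_1,\dots,c_k\}+W$. We add $c=n-w_i$, so that $n$ is covered and becomes the reserved point $p_{k+1}$. The index $i$ is chosen so that the gaps $w_i-w_{i-1}$ and/or $w_{i+1}-w_i$ exceed
\[
D=\max\{|x-y|: x,y\in P_k\cup\{n\}\cup\{c_1,\dots,c_k\}\}+\text{const}.
\]
The choice of $i$ must achieve three things. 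First, $c+W$ avoids each old reserved point $p_j$, i.e.\ $p_j-n+w_i\notin W$. Second, $n-c_j\notin W$ for $j\le k$; this is automatic because $n$ was uncovered. Third, $c$ is far below all previous elements, which keeps later steps independent.

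The condition $p_j-n+w_i\notin W$ says that $w_i+\delta\notin W$ for finitely many bounded nonzero shifts $\delta$. A big gap next to $w_i$ excludes shifts on one side automatically. Shifts on the other side are handled by taking $c$ very negative, so that $p_j-c\ge1$ is forced to be huge. Old $p_j$ with $p_j<n$ are then compared against a gap to the left of $w_i$, and those with $p_j>n$ against a gap to the right.

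\emph{Main obstacle.} The hardest step is this two-sided avoidance. The hypothesis $\limsup(w_{i+1}-w_i)=+\infty$ gives large gaps only on one side of chosen elements of $W$. My first attempt is the following. Take $w_{i+1}$ just after a huge gap, and first choose $i$ so that both $w_{i+1}$ and $w_{i+1}-w_1$ exceed $D$. Then $w_{i+1}-\delta\notin W$ for $0<\delta<w_{i+1}-w_i$, which handles one side. For the other side, reorder the enumeration so that each new uncovered $n$ exceeds every previous reserved point. Then all relevant $\delta$ have a single sign.

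It remains to check that covering is never destroyed. This holds because adding elements never uncovers integers. Every integer is eventually the first uncovered one or already covered, so $C+W=\mathbb{Z}$. The reserved points then certify minimality.
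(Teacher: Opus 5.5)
Your part (b) is correct. In (ii) you must exclude $c'=c$, but that discards only one element. The paper only cites Theorem B from Chen--Yang and gives no proof, so I am judging your argument on its own.

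Part (a) has a genuine gap, at exactly the step you flag. The claim that shifts on the other side are handled by taking $c$ very negative is not an argument: making $p_j-c$ huge does not keep it out of $W$. Your remedy, forcing each new target to exceed all earlier reserved points, then breaks the covering step. An uncovered integer lying below an earlier reserved point is never selected again. So ``every integer is eventually the first uncovered one or already covered'' fails for the reordered process, and nothing covers such integers incidentally. Concretely, suppose the $k$-th gap $g_k=w_{i_k+1}-w_{i_k}$ exceeds $D$ as you specify, with constant at least $1$. Then $g_k\ge n_k-p_1+2$. The translate $c_k+W=n_k-w_{i_k+1}+W$ has no element in $(n_k-g_k,n_k)$, and this interval contains $p_1-1$. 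Hence $p_1-1\notin C+W$.

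The missing observation is that the obstacle never arises in your original scheme, so no reordering is needed. Keep the enumeration $0,-1,1,-2,2,\dots$ and let the reserved point be the target $n$ itself. Targets advance strictly along this enumeration, because coverage only grows and the previous target is now covered. So every earlier reserved point precedes $n$ in the enumeration:
\begin{itemize}
\item if $n=m>0$, they lie in $[-m,m-1]$, all below $n$;
\item if $n=-m<0$, they lie in $[-(m-1),m-1]$, all above $n$;
\item $n=0$ occurs only at the first step, where there is nothing to avoid.
\end{itemize}
Thus all shifts $\delta_j=p_j-n$ have one sign, opposite to that of $n$. Choose a gap with $w_{i+1}-w_i>\max_j|\delta_j|$, and put $c=n-w_{i+1}$ if $n>0$, $c=n-w_i$ if $n<0$. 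Then $p_j-c\in(w_i,w_{i+1})$, so $p_j\notin c+W$. Two facts finish (a): $n\notin\{c_1,\dots,c_k\}+W$ holds automatically, and the first uncovered position advances strictly, which gives $C+W=\mathbb{Z}$. Your extra conditions on the size of $w_{i+1}$ and on distances to the earlier $c_j$ are unnecessary.
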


In 2019, Kiss, S\'{a}ndor and Yang \cite{KSY2019} investigated $W$ with
$\lim\sup_{i\rightarrow+\infty}(w_{i+1}-w_i)<+\infty$ which have a minimal complement.
They introduced the concept of eventually periodic sets
and established a modular criterion for determining the existence of minimal complements for this class of sets. Soon after, Kwon \cite{K2019} also made some new progress on this topic.

Let $W$ be a set of integers bounded below such that
\begin{equation}\label{eq1.1}
W = (m\mathbb{N} + X) \cup U \cup V,
\end{equation} where $X, U, V$ sets of integers with $[U]_m \subseteq [X]_m$ and $[X]_m, [V]_m$ are disjoint such that $V$ does not contain an arithmetic progression of common difference $m$. Let $W$ be as in (\ref{eq1.1}). If there exists a positive integer $m$ such that $X$ is non-empty and $V$ is finite, then $W$ is an eventually periodic set.
 If $X\neq \emptyset$ and there exists a set $\mathcal{C} \subseteq [m]$ such that $[\mathcal{C} + (X \cup V)]_m = [m]$, and for every $c \in \mathcal{C}$, there exists $v \in V$ such that $c+v
\not\equiv c'+x \pmod m$ for all $c' \in \mathcal{C} \setminus \{c\}$ and $x \in X \cup V$, then we say $W$ satisfies the {\it $\mathcal{C}$-condition modulo $m$.}

Recently, Ma and Chen obtained the following result:

 \begin{theoremC} (See \cite[Theorem 1.1]{Ma26}.) Let $W$ be as in (\ref{eq1.1}). If $W$ satisfies the {\it $\mathcal{C}$-condition modulo $m$} and $V$
 has the {\it big-hole property modulo $m$}, then $W$ has a minimal complement.
 \end{theoremC}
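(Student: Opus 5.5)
We construct a minimal complement $C$ directly, in the spirit of Chen and Yang's proof of Theorem B(a), working separately in each residue class modulo $m$ given by $\mathcal{C}$. Without loss of generality $\min W \geqslant 0$. The complement has the form
$$C=\bigcup_{c\in\mathcal{C}}\bigl(c+m\,Y_c\bigr)\cup F,$$
where $F$ is a finite correcting set and each $Y_c\subseteq\mathbb{Z}$ is infinite with $\inf Y_c=-\infty$. Since $W$ is bounded below, every integer $n$ must be covered by $c+w$ with $c\to-\infty$ along the relevant class.

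\textbf{Step 1: covering.} From $[\mathcal{C}+(X\cup V)]_m=[m]$, every residue $r$ modulo $m$ equals $c+x$ or $c+v$ for some $c\in\mathcal{C}$ and $x\in X\cup V$. If $r\equiv c+x$ with $x\in X$, then $c+mY_c+x+m\mathbb{N}$ covers the whole class $r$ provided $Y_c$ is unbounded below. For a class $r\equiv c+v$ that is reached only through $V$, we instead choose the elements of $Y_c$ inductively, going downward, so that each integer of class $r$ is hit by some $c+my+v$ with $v\in V_i$. Here we pick a new $y$ whenever the next uncovered integer of class $r$ appears.

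\textbf{Step 2: minimality.} For each element $c+my$ of $C$ we need a witness $n$ with $n\in c+my+W$ but $n\notin (C\setminus\{c+my\})+W$. By the $\mathcal{C}$-condition, for each $c$ there is $v\in V$ whose residue $c+v$ is not reached by any $c'+x$ with $c'\ne c$ and $x\in X\cup V$. So within that residue class, only elements of $c+mY_c$ combined with $V$ matter; contributions from $X$-classes of other $c'$ are excluded. The big-hole property then lets us choose the next $y$ so that $c+my+v_{i_k}$ lies in a long gap. Concretely, we choose $k$ with $v_{i_k}-v_{i_k-1}$ and $v_{i_k+1}-v_{i_k}$ both large compared with all previously chosen parameters. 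Then the point $n=c+my+v_{i_k}$ is covered only by the pair $(c+my,v_{i_k})$, because shifts by the other elements of $Y_c$ (which are spaced far apart by construction) land in the gaps around $v_{i_k}$. The two-sided hole condition is exactly what guarantees this for both nearer and farther elements of $Y_c$.

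\textbf{Step 3: the finite part.} Finally, we set up the inductive construction as a single sequence that interleaves the covering tasks of Step 1 with the minimality witnesses of Step 2, and check that the witnesses survive later additions. Each later $y$ is chosen much more negative, so $c+my'+V$ cannot reach earlier witnesses. This uses the big holes in both directions together with the fact that $V$ contains no AP of difference $m$.

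The main obstacle is the covering in Step 1 for classes attained only via $V$, which must be done compatibly with Step 2. Since $V$ has density gaps, covering a full residue class needs many translates, and each new translate threatens earlier witnesses. My first attempt would be to cover such a class greedily. At each stage I would take the largest uncovered $n\le N_k$ in the class and add $y$ with $c+my+v=n$ for a $v$ chosen near a large hole. The aim is that the newly covered points avoid the witness points, and I would then verify that only finitely many conflicts arise per stage. If this fails, the fallback is to use the $\mathcal{C}$-condition to route those classes through $X$ instead.
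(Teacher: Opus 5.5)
The paper only quotes Theorem~C from Ma and Chen and gives no proof of it, so your sketch has to stand on its own. As written it has genuine gaps.

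\textbf{Where the sketch fails.} In Step~2 you choose $k$ with $v_{i_k}-v_{i_k-1}$ and $v_{i_k+1}-v_{i_k}$ \emph{both} large. The big-hole property only gives the two $\limsup$'s separately, possibly along disjoint subsequences. For example, take $m=2$, $X=\{0\}$, $U=\emptyset$, $\mathcal{C}=\{0\}$ and $V=\{a_t,a_t+2: t\geqslant 1\}$ with $a_t$ odd and $a_{t+1}-a_t\to\infty$. This satisfies every hypothesis, yet each element of $V$ has a neighbor at distance $2$.

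Your protection arguments are also wrong as stated. Since $V$ is unbounded above, a translate $c+my'+V$ with $y'$ far more negative meets every region far to the right. So ``much more negative'' does not keep it off earlier witnesses. Similarly, $v_{i_k}+m(y-y')$ for a far-away $y'$ lands far from $v_{i_k}$, not in the gaps around it.

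Step~1 sweeps only downward. In a class $\rho\notin[\mathcal{C}+X]_m$ only translates of $V$ contribute, and $V$ has arbitrarily long gaps arbitrarily far out. Hence finitely many translates always leave an uncovered subset of $\rho+m\mathbb{Z}$ that is unbounded above. There is no ``next uncovered integer'' from the top, and the upper parts of these classes are not handled. Your tentative rule ``largest uncovered $n\le N_k$'' does not fix this: once $N_k$ grows, a target can have earlier witnesses on both sides, which again demands a two-sided gap.

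The fallback of routing these classes through $X$ is impossible by definition. For the classes $[c+v]_m$ in the $\mathcal{C}$-condition, this is exactly what the condition forbids. Finally, the interleaving of covering with witnesses, which you yourself call the main obstacle, is left open.

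\textbf{The missing idea.} Use the Chen--Yang device: the witness of each new element is the very point it was added to cover, and you sweep in two directions so that at every step all existing witnesses lie on one side of the new target.

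Take $C\subseteq\mathcal{C}+m\mathbb{Z}$ and targets only in classes $\rho\notin[\mathcal{C}+X]_m$. Since $[U]_m\subseteq[X]_m$, for such $n$ and $c''\in C$ one has $n-c''\in W$ iff $n-c''\in V$. Now alternate two steps.
\begin{enumerate}[(a)]
\item Let $d$ be the largest uncovered negative integer in these classes. Pick $c\in\mathcal{C}$ and $v\in V$ with $v\equiv d-c \pmod m$ whose right gap in $V$ exceeds $e-d$ for every existing witness $e$ (all of which are $>d$). Add $d-v$.
\item Let $p$ be the smallest uncovered nonnegative integer in these classes. Pick $c\in\mathcal{C}$ and $v\in V$ with $v\equiv p-c \pmod m$ whose left gap in $V$ exceeds $p-e$ for every existing witness $e$ (all $<p$). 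Add $p-v$.
\end{enumerate}
Then $e-(d-v)=v+(e-d)\notin V$ and $e-(p-v)=v-(p-e)\notin V$. Each new witness is missed by all earlier elements because it was uncovered when chosen.

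The gaps must be measured in all of $V$, not in $V_i$, because $e-d$ need not be divisible by $m$. This is why the hypothesis uses neighbors in $V$, and it needs both one-sided $\limsup$'s, one for each sweep direction.

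A class $[c+v]_m$ from the $\mathcal{C}$-condition can be covered only by elements congruent to $c$. So the downward sweep forces every $Y_c$ to be unbounded below, and the classes in $[\mathcal{C}+X]_m$ are then covered as in your Step~1. No finite set $F$ is needed, and elements outside $\mathcal{C}+m\mathbb{Z}$ could in fact destroy witnesses.
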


 For not eventually periodic set with bounded gaps, the existence question is even less understood.
In 2019, Kiss, S\'{a}ndor and Yang \cite{KSY2019} constructed the first example: an infinite, not eventually periodic set $W\subseteq \mathbb{N}$ with $w_{i+1}-w_i\in\{1,2\}$ which has a minimal additive complement.
\begin{theoremD}(See \cite[Theorem 4]{KSY2019}.) There exists an infinite, not eventually periodic set $W\subseteq \mathbb{N}$ such that $w_{i+1}-w_i\in\{1,2\}$ for all $i$ and there exists a minimal complement to $W$.
\end{theoremD}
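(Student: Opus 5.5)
The plan is to deduce the statement from Theorem C by choosing an explicit set of the form (\ref{eq1.1}) with $m=2$. Let
$$X=\{0\},\qquad U=\emptyset,\qquad V=\{2^k+1: k\geqslant 1\}=\{3,5,9,17,\dots\},$$
and put $W=2\mathbb{N}\cup V\subseteq\mathbb{N}$. Here $[X]_2=\{0\}$ and $[V]_2=\{1\}$ are disjoint, and $[U]_2=\emptyset\subseteq[X]_2$. Since consecutive elements of $V$ differ by $2^{k}\to\infty$, $V$ contains no arithmetic progression of common difference $2$. So $W$ has the shape (\ref{eq1.1}).

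\textbf{Step 1: gaps.} Every even nonnegative integer lies in $W$, so any two consecutive elements of $W$ differ by at most $2$. Thus $w_{i+1}-w_i\in\{1,2\}$ for all $i$.

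\textbf{Step 2: $W$ is not eventually periodic.} Suppose $x+T\in W$ for all sufficiently large $x\in W$. Taking $x$ a large even number, $x+T$ is even or lies in $V$; since $V$ is sparse, choosing $x$ away from $V-T$ forces $T$ even. Now take $x=2^k+1\in V$ with $k$ large. Then $x+T$ is odd, so $x+T\in V$, i.e.\ $2^k+1+T=2^{j}+1$ for some $j>k$. This requires $T\geqslant 2^{k}$, which is impossible for $k$ large. Hence $W$ is not eventually periodic.

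\textbf{Step 3: apply Theorem C.} First, $V$ has the big-hole property modulo $2$. Indeed $V$ has a single residue class $V_1=V$, which is infinite, and both the gap before and the gap after $v_{k}=2^k+1$ within $V$ tend to infinity. The gaps in the definition are taken in $V$ itself, so this is immediate. Second, $W$ satisfies the $\mathcal{C}$-condition modulo $2$ with $\mathcal{C}=\{0\}$. We have $[\{0\}+(X\cup V)]_2=\{0,1\}=[2]$. The requirement about $c'\in\mathcal{C}\setminus\{c\}$ is vacuous, so we only need some $v\in V$, which exists. Also $X\neq\emptyset$. Theorem C then yields a minimal complement to $W$, which proves the statement.

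\textbf{Main obstacle.} The only delicate point is Step 2 together with matching the exact hypotheses of Theorem C, in particular reading the big-hole definition correctly, since the neighbours $v_{i_k\pm1}$ are taken in $V$. If Theorem C were not available, I would fall back on building a minimal complement $C$ directly. I would take $C$ to consist of sparse blocks placed so that $C+2\mathbb{N}$ covers every residue class on long intervals, with each $c\in C$ paired with a distinct element $v\in V$. The aim is that $c+v$ is covered only by $c$. Achieving this uniqueness of coverage would then be the hard step.
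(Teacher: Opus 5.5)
Your argument is correct, but it takes a genuinely different route from the paper. The paper never proves Theorem D directly. It cites it and recovers it as the special case $S=\{1,2\}$ (Case 1, with $s_1=1$ and $s=2$) of Theorem~\ref{thm1}. There, $W$ and $C=\{c_i\}$ are built together by induction: $W$ is assembled from runs of consecutive integers and from blocks filled with steps from $S$ via Lemma~\ref{lem1}. Each $c_i$ is shown to be indispensable through an explicit witness $d_i$, the largest negative integer not yet covered, which can only be written as $c_i+(d_i-c_i)$.

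You instead fix the structured set $W=2\mathbb{N}\cup\{2^k+1:k\geqslant 1\}$, an eventually periodic skeleton plus a sparse set in the other residue class, and check the hypotheses of Theorem C. Your verifications are sound: the gap condition, non-periodicity (first forcing $T$ even, then using $2^j-2^k\geqslant 2^k$), the big-hole property, and the $\mathcal{C}$-condition with $\mathcal{C}=\{0\}$.

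What your route buys is brevity and a transparent reason for non-periodicity. The cost is reliance on a black-box criterion, and the shape $(m\mathbb{N}+X)\cup V$ does not extend to general $S$. For instance, with $S=\{2,3\}$ the long holes of $V$ force all gaps of the periodic part to lie in $S$, and then any element of $V$ would create a gap $1$. The paper's construction is self-contained and handles every finite $S$.

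Two small remarks. First, $\{3,5\}\subseteq V$ is a two-term progression of difference $2$. The hypothesis that $V$ contains no arithmetic progression of difference $m$ is surely meant for infinite progressions, but starting at $k\geqslant 2$ removes any doubt.

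Second, you do not need Theorem C, and your vague fallback can be replaced by a short argument. Write $V=2V'+1$ with $V'=\{2^k:k\geqslant 0\}$, and let $C'$ be a minimal complement of $V'$, which exists by Theorem B(a). Then $C=2C'$ is a minimal complement of $W$:
\begin{itemize}
\item $C'$ is unbounded below, so $2C'+2\mathbb{N}=2\mathbb{Z}$, while $2C'+V=2\mathbb{Z}+1$.
\item If $q\notin (C'\setminus\{c'\})+V'$, then $2q+1\notin (C\setminus\{2c'\})+W$. This is exactly the dilation argument of Lemma~\ref{lem2}.
\end{itemize}
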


Recently, Ma and Chen \cite{Ma26} generalized Theorem~D by proving that if $T \geqslant 2$, then there exists a not eventually periodic set $W = \{w_1 < w_2 < \cdots\}$ of natural numbers such that $W$ has a minimal additive complement, $w_{i+1} - w_i \in \{1, 2, \ldots, T\}$ for all $i$, and for each $1 \leqslant t \leqslant T$, $w_{i+1} - w_i = t$ holds for infinitely many integers $i$. They posed the following problem for further research.

\begin{problem2}(See \cite[Problem 1.3]{Ma26}.)Is it true that for any finite set $S$ of positive integers with $|S|\geq 2 $, there exists an infinite, not eventually periodic set $W = \{ w_1 < w_2 < \cdots \} \subseteq \mathbb{N}$ such that $W$ has a minimal additive complement, $w_{i+1} - w_i \in S$ for all $i$, and for each $s \in S$, $w_{i+1} - w_i = s$ holds for infinitely many integers $i$?
\end{problem2}

In fact, the authors of the present paper, together with our collaborators have made useful attempts on special cases of Problem 2: The following three results together cover several important subcases for $l=2$. Theorem~E solves the case $s_1\mid s_2$; Theorem~F and Theorem~G handle the case $s_1\geqslant 2$ and $\gcd(s_1,s_2)=1$.

\begin{theoremE}(See \cite[Theorem 1.3]{chen2026}.) Let $c$ and $d$ be two different positive integers with $c\mid d$. Then there exists an infinite, not eventually periodic set $W=\{w_1,w_2,\ldots\}$ such that $w_{i+1}-w_{i}\in\{c,d\}$ for all positive integers $i$ and there exists a minimal complement to $W$.
\end{theoremE}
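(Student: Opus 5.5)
Since $c\mid d$, I will scale everything down by $c$: write $d=kc$ with $k\geqslant 2$. If $W'$ is a not eventually periodic set with consecutive gaps in $\{1,k\}$ having a minimal complement $C'$, then set $W=cW'$ and $C=cC'+[c]$. The gaps of $W$ lie in $\{c,d\}$, and $W$ is not eventually periodic because $W'$ is not. We also get $C+W=c(C'+W')+[c]=c\mathbb{Z}+[c]=\mathbb{Z}$. For minimality, suppose we remove some $cx+r$ with $x\in C'$ and $0\leqslant r<c$. Every element of $C+W$ congruent to $r$ modulo $c$ has the form $c(x'+w')+r$ with $x'\in C'$ and $w'\in W'$. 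So losing $cx+r$ amounts to losing $x$ from $C'$ in the residue class $r$, and by minimality of $C'$ some integer is then uncovered. This reduces the problem to $c=1$, $d=k\geqslant 2$.

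For gaps in $\{1,k\}$ I will use the mechanism of Theorem~D (the case $k=2$) and Theorem~C, working modulo $m=k$. First choose an eventually periodic skeleton. Let $W_0=k\mathbb{N}$ with gaps exactly $k$, and insert extra elements to create gaps equal to $1$. Concretely, $W'$ is $k\mathbb{N}$ together with a sparse set $V$ of ``defects''. Each defect fills a block $[kn, kn+k]$ by a run of consecutive integers, forcing gaps of $1$ there. Only the residues $\not\equiv0\pmod k$ are affected by the defects, and $V$ is chosen with the big-hole property modulo $k$: the defect blocks occur at positions $kn_j$ with $n_{j+1}-n_j\to\infty$ rapidly. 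Then $W'$ is not eventually periodic, both gaps $1$ and $k$ occur infinitely often, and $W'$ has the form (\ref{eq1.1}) with $X=\{0\}$, $U=\emptyset$, and $V$ containing no arithmetic progression of difference $k$.

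Next I will verify the $\mathcal{C}$-condition modulo $k$ with $\mathcal{C}=\{0\}$, or a small set. We have $[\mathcal{C}+(X\cup V)]_k=[k]$ since $V$ meets every nonzero residue. The uniqueness requirement asks for some $v\in V$ such that $c+v$ is not hit by another $c'+x$. With a single element of $\mathcal{C}$ this is vacuous. If a larger $\mathcal{C}$ is needed, I will take $\mathcal{C}=\{0\}$ and arrange the residues of $V$ accordingly. Theorem~C then yields a minimal complement to $W'$, and scaling back gives the theorem.

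The main obstacle is the compatibility of the block structure with the hypotheses of Theorem~C. The defect runs must produce gaps only in $\{1,k\}$: a filled block $kn,kn+1,\dots,kn+k$ gives gaps $1$, and the neighbouring gaps remain $k$, so this works. The sets $V_i$ must each be infinite with $\limsup$ of both neighbouring gaps infinite. However, in $W'$ the neighbours of $v\in V$ lie at distance $1$, so as written the big-hole property fails. I would first try to reinterpret the property as the gaps within $V$ itself, which is how such conditions are typically used. If Theorem~C's statement truly requires large gaps in $W'$ around $v$, I would instead construct $C$ directly in the spirit of Chen--Yang. That means building $C$ inductively, block by block, so that each $c\in C$ has a private witness $n=c+v$ at a defect $v$ lying beyond all previously treated range. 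This direct construction is the delicate step.
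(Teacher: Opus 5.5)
Your scaling step is correct and is the paper's Lemma~\ref{lem2}: take $C=cC'+[c]$ and argue one residue class at a time. The gap is in the main step for the gap set $\{1,k\}$, where Theorem~C cannot be applied once $k\geqslant 3$.

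The big-hole property, as defined, already refers to gaps inside $V$: the $v_{i_k\pm1}$ are the neighbours of $v_{i_k}$ in $V$. So your ``reinterpretation'' is simply the literal definition. It still fails for your $V$ when $k\geqslant 3$. Inside a filled block $\{kn_j+1,\dots,kn_j+k-1\}$, the element of residue $r$ has a $V$-neighbour at distance $1$ on the left unless $r=1$, and on the right unless $r=k-1$. Hence for every residue class one of the two $\limsup$'s equals $1$.

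This is not caused by your particular choice of $m$ and $X$; it happens for any decomposition (\ref{eq1.1}) of a set with gaps in $\{1,k\}$ whose $V$ has big holes.
\begin{enumerate}
\item The periodic part $P$ agrees with $W'$ on arbitrarily long $V$-free stretches, so $P$ eventually has gaps in $\{1,k\}$.
\item Each large $v\in V$ lies strictly inside a gap $(p,p+k)$ of $P$, and gaps in $\{1,k\}$ then force $(p,p+k)\subseteq V$.
\item The offset $v-p$ depends only on $v \bmod m$, so a residue class of $V$ can have large $V$-gaps on both sides only if $1=k-1$.
\end{enumerate}
So Theorem~C gives at most the case $k=2$ (essentially Theorem~D), and your argument proves nothing for $k\geqslant 3$.

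Your fallback, building $C$ directly ``in the spirit of Chen--Yang'', is the whole content of the theorem, and you do not carry it out. The paper does it in Case~1 of Theorem~\ref{thm1} with $S=\{1,k\}$, by constructing $W$ and $C$ simultaneously. The $c_1>c_2>\cdots$ are very negative, and $d_i$ is the largest negative integer not in $W_{i-1}+C_{i-1}$. Then $W$ is extended on $(-2c_{i-1},-2c_i]$ in three pieces:
\begin{enumerate}
\item a run of consecutive integers ending exactly at $d_i-c_i$, so that $d_i\in c_i+W$ and $d_{i+1}\leqslant d_i-2s-2$;
\item blocks with gaps in $S$ (Lemma~\ref{lem1}) that leave $d_{i'}-c_i$ out of $W$ for every $i'<i$, by starting with a jump $s_l$ from $d_{i'}-c_i-1$;
\item a final run of consecutive integers up to $-2c_i$, so that $c_i+W\supseteq[-c_{i-1},-c_i)$.
\end{enumerate}
Then $c_i$ is the only element of $C$ that represents $d_i$. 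Indices $j<i$ are excluded because $d_i\notin W_{i-1}+C_{i-1}$ and $W\setminus W_{i-1}$ lies above $-2c_{i-1}$. Indices $j>i$ are excluded by the omitted points. That is the missing mechanism: one witness per element of $C$ whose representation is forced to be unique, while every gap stays in $\{1,k\}$.
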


\begin{theoremF}(See \cite[Theorem 1.2]{Guo2025}.) Let $m$ and $h$ be positive integers with $m\geq2$, $h\equiv 1\pmod{m}$. There exists an infinite, not eventually periodic set $W=\{w_i\}_{i=1}^{\infty}\subseteq \mathbb{N}$ such that $w_{i+1}-w_i\in\{m,h\}$ for all $i$ and there exists a minimal complement to $W$.
\end{theoremF}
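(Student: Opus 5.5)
The plan is to construct $W$ and its minimal complement $C$ simultaneously and by hand, in the spirit of the proof of Theorem~D. Write $h=km+1$ with $k\geqslant 0$. The key observation is that an $m$-gap preserves the residue of $w_i$ modulo $m$, while an $h$-gap raises it by exactly $1$. So a run of $m$-gaps is an arithmetic progression of difference $m$ inside a single residue class, and each $h$-gap moves $W$ to the next class. I will take $W$ to be a concatenation of blocks $B_1,B_2,\ldots$. Each block is a long run of $m$-gaps, and consecutive blocks are joined by a single $h$-gap. The block lengths $L_1<L_2<\cdots$ grow rapidly, say $L_{j+1}>2(L_j+\cdots+L_1)+j$. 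Then $W$ is not eventually periodic, because the positions of the $h$-gaps become arbitrarily sparse, and both gap values $m$ and $h$ occur infinitely often. (If $W$ had period $T$, then for large $x\in W$ the translate $x+T$ would have to see the same gap pattern, which the unbounded and non-repeating block lengths forbid.)

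For the complement, I will take $C=\{0,1,\ldots,m-1\}-D$ for a suitable set $D=\{d_1<d_2<\cdots\}$ of multiples of $m$. Then $C+W=\mathbb{Z}$ reduces to the requirement that $(W\bmod m)$-classes shifted by $D$ cover everything. Since each block $B_j$ is an interval of a residue class of length about $mL_j$, it suffices that every integer $n$ lie within some $d+B_j$ for some $d\in D$ and some $j$. I choose the $d_j$ inductively so that the translates $d_j+B_j$ cover a new long segment of $\mathbb{Z}$ on both sides. Because $W$ is bounded below, I must let $d_j\to+\infty$ to cover the negative integers. The long blocks allow one translate to cover a huge window, and the positive side is covered automatically once windows grow.

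Minimality is the main step. For each $c=r-d_j\in C$ I need a witness integer $n_c$ with $n_c-c\in W$ and $n_c-c'\notin W$ for all $c'\in C\setminus\{c\}$. I would choose the witness near the two ends of the block $B_j$, exactly at the $h$-gap adjacent to it. Just past the end of $B_j$ the residue class changes by $1$. Using $h\equiv 1\pmod m$, the residue $r$ paired with the block's class is then met only by the shift $d_j$. The rapid growth of $L_j$ and of $d_j$ ensures that no other translate $d_i+B_{i'}$ reaches that point in the correct residue class. I expect this step to be the main obstacle. The danger is that some far-away block $B_{i'}$ with $i'\neq j$, shifted by another $d_i$, lands on the witness with the right residue, since residues cycle with period $m$ across blocks. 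To control this, I would first try taking the $d_j$ so large and so sparse that, for $i\neq j$, the sets $d_i+W$ and $d_j+W$ meet the window around the witness only in blocks of the wrong residue. This would be enforced inductively by a pigeonhole and size comparison, using $L_{j+1}\gg L_j$. If that fails, the fallback is to thin $C$ to a minimal subset via a Zorn-type argument while keeping the witnesses, which is Nathanson's argument adapted to the local finiteness provided by the construction.
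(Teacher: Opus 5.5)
\textbf{The gap is in the minimality step, and it is fatal:} with your $W$ and $C=[m]-D$, no choice of $D$ makes $C$ minimal. Since $C\supseteq [m]-d_i$ contains a full residue system for every $d_i\in D$, residues cannot separate different shifts.

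Suppose $n$ witnesses $c=r-d_j$. Then $n-(r'-d_i)\notin W$ for every $i\neq j$ and every $r'\in[m]$, i.e.\ $n+d_i\notin [m]+W$. For $i>j$ we have $n+d_i>n+d_j-r\geqslant\min W$. On $[\min W,\infty)$ the only integers outside $[m]+W$ are the holes $[\max B_l+m,\max B_l+h-1]$, one per block boundary, each containing $h-m$ integers (none if $h=1$, one if $h=m+1$). Consecutive holes are separated by the $m|B_{l+1}|$ integers of $[m]+B_{l+1}$, and this separation tends to infinity. So $n+d_i$ must lie in a hole for each of the infinitely many $i>j$.

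Witnesses of distinct elements of $C$ are distinct: if $n$ witnessed both $c$ and $c'$, then $n-c\in W$ would violate the witness property of $c'$. Take any two witnesses $n,n'$, for $r-d_j$ and $r'-d_{j'}$. Choosing $i>\max(j,j')$ large puts $n+d_i$ and $n'+d_i$ into holes so far out that distinct holes are more than $|n-n'|$ apart, so $|n-n'|\leqslant h-m-1$. That gives infinitely many distinct integers with bounded pairwise distances, a contradiction.

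For $h=1$ the failure is immediate: $[m]+W=[\min W,\infty)$, so $c+W\subseteq([m]-d_i)+W$ whenever $c=r-d_j$ with $d_j<d_i$, and every element of $C$ is redundant. So the danger you flag is not a residue collision that a sparse $D$ can avoid. The real problem is that every later full block $[m]-d_i$ covers everything except a sparse set of holes.

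\textbf{The fallback does not help.} Nathanson's thinning argument uses that $W$ is finite: each $n$ then has finitely many representations, so a chain of complements has a complement as its intersection. For infinite $W$ this fails, and Theorem~B(b) exhibits sets bounded below with no minimal complement at all.

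\textbf{What is missing} is that $W$ must be shaped, stage by stage, by the previously chosen witnesses. In the paper (Case~2 with $s_1=m$, $s_j=h$, $r=1$), $C$ is likewise a union of blocks $\{c_t^{(1)},c_t^{(1)}-1,\dots,c_t^{(1)}-m+1\}$, and the witnesses of block $t$ are $d_t,d_t-h,\dots,d_t-(m-1)h$. The part of $W$ built at stage $t$ contains, for every earlier $u<t$, the consecutive elements $d_u-c_t^{(1)}-1-h\lambda$ $(0\leqslant\lambda\leqslant m-1)$ followed by $d_u-c_t^{(1)}-1+h$. This run of $h$-gaps puts every earlier witness, shifted by every $-c_t^{(\delta)}$, into a gap of $W$ (properties (iv-c), (iv-d), (iv-f-1), (iv-g-1)). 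Clusters of consecutive $h$-gaps, that is, blocks of length one, placed at prescribed positions are exactly what your design of isolated $h$-gaps between ever longer blocks rules out.
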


\begin{theoremG}(See \cite[Theorem 1.1]{He2026}.)\label{theoremG}Let $m\geq 3$ and $2\leq r\leq m-1$ be positive integers with $(r,m)=1$. There exists an infinite, not eventually periodic set $W\subseteq \mathbb{N}$ such that $w_{i+1}-w_i\in\{m,m+r\}$ for all $i$ and there exists a minimal complement to $W$.
\end{theoremG}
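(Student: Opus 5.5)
The natural approach is to build $W$ and a complement $C$ together, inductively, so that $W$ has gaps in $\{m,m+r\}$ and every element of $C$ has a private witness. The model for the bookkeeping is the greedy construction behind Theorem~A (as in \cite{CY2012}). The reduction to residue classes follows the residue-class/big-hole philosophy of Theorem~C, although $W$ itself will not literally be of the form (\ref{eq1.1}).

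\emph{The set $W$.} Since every gap is $m$ or $m+r$, a gap $m$ keeps the residue mod $m$ and a gap $m+r$ shifts it by $r$. Because $\gcd(r,m)=1$, $m$ consecutive jumps of size $m+r$ pass through every residue class mod $m$. I would take $W=\bigcup_k B_k$, where $B_k$ consists of $m$ consecutive \emph{runs}. Each run is an arithmetic progression of difference $m$ of length $L_k$, and consecutive runs are joined by a gap $m+r$. The lengths satisfy $L_k\to\infty$ very rapidly, and the gaps between blocks are also $m+r$.

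Then both gap sizes occur infinitely often. $W$ is not eventually periodic, since the runs have unbounded length but are not eventually a single progression. Inside each block, every residue class mod $m$ is represented by a long progression. Each residue class of $W$ also has unboundedly long ``holes'', namely the stretches of $W$ occupied by the other $m-1$ runs; these play the role of the big-hole property modulo $m$.

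\emph{The complement $C$.} Enumerate $\mathbb{Z}=\{n_1,n_2,\dots\}$, for instance $0,1,-1,2,-2,\dots$, and build $C=\{c_1,c_2,\dots\}$ together with witnesses $y_1,y_2,\dots$. At step $j$, let $n$ be the first integer not yet in $C_{j-1}+W$. Choose $c_j$ very negative, so that $n-c_j$ falls in the interior of a run of a far-out block $B_k$ with $k$ much larger than anything used so far. Then $n\in C+W$.

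Next choose the witness $y_j=c_j+w$, where $w$ lies in a run of $B_k$ of a residue class $\rho$ that is deep inside one of the holes of the other classes. The requirement is
\[
y_j-c_i\notin W \quad\text{for all } i\neq j,\ i\le j.
\]
For $i<j$ this holds because $|c_j|$ is enormous compared with $|c_i|$, so $y_j-c_i$ lands in a different run of $B_k$. That run has a different residue, and we can arrange $y_j-c_i\not\equiv \rho$ by choosing the run index, using that the $c_i \bmod m$ are finitely many. For later $c_{j'}$ ($j'>j$) the same property is preserved by taking $c_{j'}$ so negative that $y_j-c_{j'}$ exceeds the region already fixed and lands in a prescribed hole of a later block. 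This last requirement forces $L_{k'}$ and the placement of later blocks to be chosen after $y_j$. Hence $W$ and $C$ are constructed simultaneously, with $B_{k'}$ determined only once the earlier witnesses are known.

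\emph{Minimality.} At the end, $C+W=\mathbb{Z}$ because every integer is eventually covered. Each $c_j$ is essential, since $y_j$ is represented only as $c_j+w$. Hence $C$ is a minimal complement.

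The main obstacle is the witness condition \emph{against all future} $c_{j'}$. The differences $y_j-c_{j'}$ run over a sparse but infinite set. We must guarantee that each falls into a gap of $W$ of the correct residue class, while $W$ must keep gaps only in $\{m,m+r\}$ and cannot have genuine holes. I would first try to make every $c_{j'}$ lie in a single fixed residue class $c_{j'}\equiv c_0\pmod m$, corrected by shifts $t\in[m]$ only where needed for covering. Then $y_j-c_{j'}$ has fixed residue $y_j-c_0$, and it suffices to place it inside the run of block $B_{k'}$ belonging to a different residue. The coprimality $\gcd(r,m)=1$ guarantees that such a run exists and can be positioned by choosing the starting point of $B_{k'}$. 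The condition $r\ge 2$ should only be needed to prevent the runs from degenerating into the eventually periodic case.
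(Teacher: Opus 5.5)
\textbf{There is a genuine gap.} It sits at the step you yourself call the main obstacle, and with your rigid blocks it is not merely unfinished: it fails.

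When you choose $c_{j'}$, you need $n-c_{j'}\in W$ and $y_i-c_{j'}\notin W$ for all $i<j'$ simultaneously. These are conditions on a rigid translate of the finite set $\{n,y_1,\dots,y_{j'-1}\}$. Inside a run of residue $\rho$, membership in $W$ depends only on residue mod $m$. Once the runs of the new block are longer than the diameter of this set (which your rapidly increasing $L_k$ forces), the translate meets at most two consecutive runs and the single junction between them.

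Now take two earlier witnesses $y_a<n<y_b$ with $y_a\equiv y_b\equiv n\pmod m$. Whichever run contains $n-c_{j'}$ also contains $y_a-c_{j'}$ or $y_b-c_{j'}$. That point has the same residue, so it lies in $W$. Hence no choice of $c_{j'}$ works.

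\begin{itemize}
\item Nothing in your greedy choice of $n$ or of the witnesses rules out this pattern.
\item Putting all $c_{j'}$ in one residue class does not help, because the obstruction does not depend on $c_{j'}$.
\item With a single length parameter per block, you also have no means of putting run boundaries at many prescribed points instead.
\item Your justification for $i<j$ is off as well. $y_j-c_i$ differs from $w=y_j-c_j$ by $c_j-c_i$, which has size about $|c_j|$, so nothing places it in another run of $B_k$. What is actually needed is $y_j\notin C_{j-1}+W$, and you do not verify it.
\end{itemize}

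\textbf{How the paper handles this.} Theorem G is the case $s_1=m$, $s_j=s_2=m+r$, $k=1$ of Case~2 in the proof of Theorem~\ref{thm1}, and that construction avoids exactly this rigidity. $W$ is not assembled from prescribed blocks.

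\begin{itemize}
\item At stage $t$ the paper adds a group of $m$ consecutive complement elements $c_t^{(1)},\dots,c_t^{(m)}$.
\item The witnesses sit at small prescribed offsets from the frontier points $d_i$, which are essentially the largest still-uncovered negative integers.
\item Consequently every earlier witness, shifted by $-c_t$, falls in the window $(d_t-c_t^{(1)},\,d_1-c_t^{(1)}]$. This window lies beyond $-2c_{t-1}^{(m)}$, so it is built from scratch at stage $t$.
\item The window is cut at the points $d_i-c_t^{(1)}-1$ ($i<t$). Each piece is tiled with steps from $S$ using Brauer's Lemma~\ref{lem1}, arranged so that around every cut point $W$ proceeds in steps of $m+r>m$.
\item Thus all translated witnesses land in gaps of $c_t+W$; these are properties (iv-c), (iv-d), (iv-f-1,2), (iv-g-1,2). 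The offsets governed by $h(s_1;r)$ and $\delta_r$ separate the $m$ members of a group from one another.
\item Covering is done in bulk, not one integer at a time. $\{c_t^{(1)},\dots,c_t^{(m)}\}+V_{t-1}^{(1)}$ covers the next stretch of negative integers, and $\{c_t^{(1)},\dots,c_t^{(m)}\}+V_{t-1}^{(3)}$ covers $[-c_{t-1}^{(m)},-c_t^{(m)})$.
\end{itemize}

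The idea missing from your plan is this freedom to insert a long gap at arbitrary prescribed positions, in a region constructed only after the relevant witnesses are known.

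A smaller point: the hypothesis $r\ge 2$ plays no structural role. The case $r=1$ is Theorem~F, and the same Case~2 argument covers it with $h(s_1;1)=s_1-1$.
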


\begin{remark1}\label{rem1} Under the same hypotheses on $m,r$ as in Theorem G, the conclusion remains valid with $\{m,m+r\}$ replaced by $\{m,h\}$ for any $h\equiv r\pmod{m}$.
\end{remark1}

Let $S$ be a finite set of positive integers with $|S|\geqslant 2$ and $W = \{ w_1 < w_2 < w_3 < \dots \} \subseteq \mathbb{N}$
be an infinite increasing sequence. We call $W$ an \textit{$S$-difference set} if it satisfies: For every $i \geqslant 1$, we have $w_{i+1} - w_i \in S$ and
each $s \in S$ occurs as the difference $w_{i+1} - w_i$ for infinitely many indices $i$. If, in addition, $W$ is not eventually periodic, then we say $W$ is an infinite, not eventually periodic $S$-difference set, abbreviated as \textit{INEP $S$-difference set}.

Motivated by the constructive ideas developed in Theorem F and Theorem G, we unify the proof framework and give an affirmative answer to Problem~2 by extending the scope to arbitrary $S$-difference sets. Our main result is as follows:

  \begin{theorem}\label{thm1} Let $l\geqslant 2$ be a positive integer and $S=\{s_1, s_2, \dots, s_l\}$ be a set of positive integers with $1\leqslant s_1<s_2<\cdots<s_l$. There exists an {\it INEP $S$-difference set} $W$ and there exists a minimal complement to $W$.
\end{theorem}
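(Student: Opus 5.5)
The plan is to build the set $W$ and its complement $C$ simultaneously by induction. Since $W$ is bounded below, $C$ must be unbounded below. The decisive freedom is that far-out pieces of $W$ are still unconstructed when a new element of $C$ (very negative) is added, so we can shape those pieces to protect earlier ``private witnesses''.

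Put $m=s_1$. The set $W$ will be a concatenation of blocks $B_1,B_2,\dots$. Each block $B_k$ is a run of $L_k$ consecutive differences equal to $m$, followed by a short ``gadget'' of differences from $S$ that uses every $s_j$ with $j\geqslant 2$ at least once. We take $L_k\to\infty$, strictly increasing. This makes every $s\in S$ occur infinitely often as a difference. It also makes $W$ not eventually periodic: a period $T$ would force the gaps larger than $m$ to recur with bounded spacing, which contradicts $L_k\to\infty$. The long runs guarantee that $W$ contains arbitrarily long arithmetic progressions of difference $m$. After finitely many gadgets these progressions lie in a residue class mod $m$ that we can steer through the choice of the gadget, and we can arrange every residue class to recur.

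The induction maintains the following data at stage $k$:
\begin{itemize}
\item a finite set $C_k=\{c_1>\dots>c_k\}$;
\item a finite initial segment $W^{(k)}$ of $W$, so that further blocks only add elements beyond $\max W^{(k)}$;
\item witnesses $n_1,\dots,n_k$ with $n_i\in c_i+W^{(k)}$ and $n_i\notin c_j+W$ for $j\neq i$, for every admissible continuation of $W$;
\item an interval $[-N_k,N_k]$ covered by $C_k+W^{(k)}$, with $N_k\to\infty$.
\end{itemize}

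For the step, let $x$ be the smallest uncovered integer in absolute value, typically $-N_k-1$. Choose $c_{k+1}$ very negative, with the distance $x-c_{k+1}$ placed inside the next long $m$-run, in the correct residue class. Then $c_{k+1}+(\text{run})$ covers $x$ and a long stretch of its class. The remaining classes near $x$ are handled by a few further new elements, chosen in the same way with different residues.

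The main obstacle is keeping the old witnesses private. We need $n_i-c_{k+1}\notin W$ for all $i\leqslant k$, and these differences are huge, so they land in the not-yet-built part of $W$. We therefore build that part so that each of these finitely many values falls into a hole of $W$. Holes exist because the gadgets use differences larger than $m$. When $s_1\geqslant 2$, every integer not $\equiv$ the current run class mod $m$ is a hole inside a run. The delicate case is when the relevant residues coincide, including $s_1=1$. There we insert a gadget, for example a single difference $s_2$, positioned exactly so that it skips the target point $n_i-c_{k+1}$. Because $L_k$ is free, we can shift the gadget's position by any multiple of $m$. Because $c_{k+1}$ may be adjusted by multiples of $m$ without changing its residue, the finitely many targets can be placed into distinct gadgets that are far apart.

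We must also check two compatibility conditions. First, the new witness $n_{k+1}$, chosen as an element of $c_{k+1}+W$ near $x$ that is not hit by $C_k+W$, must stay uncovered by $C_k$ and by all future $c$'s; the latter is ensured by repeating the hole-placement argument at every later stage. Second, these choices must not destroy the coverage already obtained. In the limit, $C=\bigcup_k C_k$ satisfies $C+W=\mathbb Z$, and each $c_i$ has the private witness $n_i$, so $C$ is a minimal complement to the INEP $S$-difference set $W$.
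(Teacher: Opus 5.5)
Your overall strategy is also the paper's: build $W$ and $C$ together, give each new element a private witness, and protect old witnesses by making the translates $n_i-c_{k+1}$ holes in the not-yet-built part of $W$. The step that carries all the difficulty, however, is argued incorrectly. Changing $c_{k+1}$, by multiples of $m$ or otherwise, translates all targets $n_i-c_{k+1}$, $i\le k$, rigidly. Their mutual distances $n_i-n_{i'}$ are fixed, and they all lie in a window of bounded length (about $2N_k$) next to $x-c_{k+1}$, which must itself belong to $W$. So they cannot be ``placed into distinct gadgets that are far apart''.

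For $s_1=1$, and for every target in the run's class when $s_1\ge 2$, the run through $x-c_{k+1}$ must stop before the nearest target. The whole window must then be tiled by gaps from $S$ that omit exactly the prescribed points. That is possible only if the witnesses are suitably spaced. For $S=\{1,2\}$ two adjacent witnesses can never both be omitted. In general each stretch between consecutive targets must be a positive combination of the $s_i$ with the right boundary steps, which is a Frobenius-type condition. Your proposal neither controls the witness spacing nor supplies such a representation, and this is what the paper's proof actually consists of. FACT I/II guarantee $|d_{i+1}-d_i|>s$. In Step 2, Brauer's lemma fills each sub-window $(d_{i-u}-c_i-1,\,d_{i-u-1}-c_i-1]$ starting with a step $s_l$ (resp.\ $s_j$) from $d_{i-u}-c_i-1\in W$, so that the target $d_{i-u}-c_i$ is skipped.

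This also breaks your block design. Every window reproduces the bounded spacings of the early witnesses, so short runs recur and $L_k$ cannot be strictly increasing. For non-periodicity you only need $\limsup L_k=\infty$.

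For $s_1\ge 2$ a second ingredient is missing. If several new elements are added at a stage to cover the other residue classes, each needs its own witness: an integer outside $C_k+W$ that is hit by that element and by none of the others. Such integers need not exist near $x$. For $S=\{3,4\}$, suppose a $3$-run is preceded by a single step $4$, and three consecutive new elements (as in the paper's $c_i^{(\delta)}$) are placed on that run. Then exactly one integer is left uncovered between the two covered stretches.

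The paper deliberately manufactures enough uncovered integers in all needed classes:
\begin{itemize}
\item $V_{i-1}^{(1)}$ begins with $s_1$ steps of size $s_j$, where $s_1\nmid s_j$;
\item $d_i$ is shifted by $r-1$;
\item the witnesses are $d_i-s_j\frac{\delta-\delta_r}{r}-(\delta_r-1)$, or the variant for $\delta_r\ge 2$;
\item Step 2 places short $s_j$-progressions, governed by $h(s_1;r)$, around each old witness cluster so they stay uncovered later.
\end{itemize}
``Chosen in the same way with different residues'' supplies none of this. If you instead add one element per stage with witness $x$ itself, witnesses of different classes end up at adjacent integers, and you must face the spacing problem above directly.
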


The study of minimal additive complements has attracted widespread attention since Nathanson's work. For related problems see \cite{AL2021}-\cite{BS2021AAM}, \cite{BL2023}, \cite{K2019},\cite{Z2023}.

 \section{Lemmas}

 \begin{lemma}\label{lem1} (See \cite[Theorem 1]{Brauer42}.) Let $a_1 \leqslant a_2 \leqslant \dots \leqslant a_k$ be relatively prime positive integers. We put
$$S = S(a_1, a_2, \dots, a_k) = a_2 + a_3 + \dots + a_{k-1} + a_1 a_k.$$
For $n > S$ the Diophantine equation
$$a_1 x_1 + a_2 x_2 + \dots + a_k x_k = n$$
always has solutions in positive integers $x_1, x_2, \dots, x_k$.
\end{lemma}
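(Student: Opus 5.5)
The plan is to reduce positive solutions to nonnegative ones and then bound the Frobenius number by a residue-class argument modulo $a_1$. Writing $x_i=y_i+1$, the equation $a_1x_1+\dots+a_kx_k=n$ has a solution in positive integers if and only if
$$a_1y_1+a_2y_2+\dots+a_ky_k=m,\qquad m:=n-(a_1+a_2+\dots+a_k),$$
has a solution in nonnegative integers $y_i$. The hypothesis $n>S=a_2+\dots+a_{k-1}+a_1a_k$ is equivalent to
$$m>a_1a_k-a_1-a_k .$$
So it suffices to show that every integer $m>a_1a_k-a_1-a_k$ is a nonnegative integer combination of $a_1,\dots,a_k$.

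First I will dispose of the degenerate cases. If $a_1=1$ (in particular if $k=1$, where coprimality forces $a_1=1$), then every $m\geqslant 0$ is representable, and the bound $m>-1$ suffices. So assume $a_1\geqslant 2$, hence $k\geqslant 2$.

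The key step is the following claim: for every residue class $r$ modulo $a_1$, there is a nonnegative combination $t_r$ of $a_2,\dots,a_k$ with $t_r\equiv r\pmod{a_1}$ and $t_r\leqslant (a_1-1)a_k$. To prove it, let $R_j\subseteq\mathbb{Z}/a_1\mathbb{Z}$ be the set of residues of sums of at most $j$ terms from $\{a_2,\dots,a_k\}$, with repetitions allowed. Then $R_0=\{0\}$ and $R_{j+1}=R_j+\{0,a_2,\dots,a_k\}$, so $R_0\subseteq R_1\subseteq\cdots$.
\begin{itemize}
\item[(i)] If $R_{j+1}=R_j$ for some $j$, then $R_j$ is closed under adding each $a_i$ with $i\geqslant 2$. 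Hence $R_j$ contains the subgroup generated by $a_2,\dots,a_k$ modulo $a_1$.
\item[(ii)] Because $\gcd(a_1,\dots,a_k)=1$, that subgroup is all of $\mathbb{Z}/a_1\mathbb{Z}$.
\item[(iii)] Therefore the chain $R_0\subseteq R_1\subseteq\cdots$ grows strictly until it becomes the whole group. Since $|R_0|=1$ and the group has $a_1$ elements, we get $R_{a_1-1}=\mathbb{Z}/a_1\mathbb{Z}$.
\end{itemize}
Each residue $r$ is therefore realized by a sum of at most $a_1-1$ terms, each at most $a_k$. This gives $t_r\leqslant (a_1-1)a_k$, proving the claim.

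To conclude, let $m>a_1a_k-a_1-a_k=(a_1-1)a_k-a_1$, and let $r\equiv m\pmod{a_1}$. Then $m>t_r-a_1$, and $m\equiv t_r\pmod{a_1}$, so $m\geqslant t_r$. Hence $m=t_r+a_1y_1$ with $y_1\geqslant 0$, which is a nonnegative representation. Undoing the substitution $x_i=y_i+1$ gives the required positive solution.

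The main obstacle is the residue-covering claim, specifically the stabilization step (i)–(iii) showing that $a_1-1$ summands suffice. Everything else is bookkeeping.
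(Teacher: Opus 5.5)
Your proof is correct. It necessarily takes a different route from the paper, because the paper gives no proof: the lemma is simply quoted from Brauer's 1942 paper. Your substitution $x_i=y_i+1$ correctly reduces the statement to the classical bound $(a_1-1)(a_k-1)-1$ on the Frobenius number of $a_1,\dots,a_k$, usually attributed to Schur. That reduction is for $k\geqslant 2$; the case $k=1$ is trivial under any reading of $S$.

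Your chain argument in $\mathbb{Z}/a_1\mathbb{Z}$ proves that bound cleanly. If $R_{j+1}=R_j$, then $R_j$ contains $0$ and is closed under adding each $a_i$ with $i\geqslant 2$. So it contains the submonoid these residues generate, which in a finite group coincides with the subgroup $\gcd(a_1,\dots,a_k)\mathbb{Z}/a_1\mathbb{Z}=\mathbb{Z}/a_1\mathbb{Z}$. Hence $R_{a_1-1}$ is everything, $t_r\leqslant (a_1-1)a_k$, and $m\geqslant t_r$ follows from the congruence. (The same bound of $a_1-1$ summands also follows by pigeonhole on partial sums: among $a_1$ or more summands, some nonempty consecutive block has sum divisible by $a_1$ and can be deleted without changing the residue.)

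Compared with the citation, your version makes the paper self-contained and shows exactly what is used: only $\gcd(a_1,\dots,a_k)=1$ and $a_k\geqslant a_2,\dots,a_{k-1}$, with $a_1$ serving merely as the modulus. Along the way it exhibits the useful intermediate fact that every residue class modulo $a_1$ is hit by at most $a_1-1$ of the $a_i$ with $i\geqslant 2$. This is exactly the form in which the lemma is invoked in Section 3: $k=l\geqslant 2$, $\gcd(s_1,\dots,s_l)=1$, applied to interval lengths exceeding $s=s_2+\cdots+s_{l-1}+s_1s_l$. The citation, of course, buys brevity.
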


 \begin{lemma}\label{lem2} Let $d,k\geqslant 2$ be positive integers and $S=\{ds_1,\ldots,ds_k\}$ be a set of positive integers with $\gcd(s_1,\ldots,s_k)=1$.
 If $W = \{ w_1 < w_2 < \cdots \} \subseteq \mathbb{N}$ is an $\{s_1,\ldots,s_k\}$-difference set and $W$ has a minimal additive complement, then $dW$ is an $S$-difference set and $dW$ has a minimal additive complement.
  \end{lemma}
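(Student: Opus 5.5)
The statement to prove is Lemma~\ref{lem2}. I will split it into two parts: first that $dW$ is an $S$-difference set, then that it has a minimal complement, built from a minimal complement $C$ of $W$.

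The difference-set part is immediate. Write $dW=\{dw_1<dw_2<\cdots\}$. Then $dw_{i+1}-dw_i=d(w_{i+1}-w_i)\in\{ds_1,\ldots,ds_k\}=S$. Each $ds_j$ occurs as a difference exactly as often as $s_j$ occurs in $W$, hence infinitely often. The non-periodicity of $dW$, if it is needed later, transfers the same way: if $dW$ were eventually periodic with period $T$, then $d\mid T$ because $dW\subseteq d\mathbb{Z}$, and $W$ would be eventually periodic with period $T/d$.

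For the complement, let $C$ be a minimal additive complement to $W$ and set
$$C'=dC+[d]=\{dc+r: c\in C,\ 0\leqslant r\leqslant d-1\}.$$
\emph{Step 1: $C'$ is a complement.} Any $n\in\mathbb{Z}$ can be written as $n=dq+r$ with $r\in[d]$. Since $C+W=\mathbb{Z}$, we have $q=c+w$ for some $c\in C$, $w\in W$. Hence $n=(dc+r)+dw\in C'+dW$.

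\emph{Step 2: $C'$ is minimal.} Remove an element $dc_0+r_0$ from $C'$. Because the map $(c,r)\mapsto dc+r$ is injective, this is a single pair $(c_0,r_0)$. By minimality of $C$ there is $q_0\in\mathbb{Z}$ with $q_0\notin (C\setminus\{c_0\})+W$. Take $n=dq_0+r_0$ and suppose $n=(dc+r)+dw$ with $dc+r\neq dc_0+r_0$. Reducing modulo $d$ forces $r=r_0$, and then $c+w=q_0$. By the choice of $q_0$ this gives $c=c_0$, so $dc+r=dc_0+r_0$, a contradiction. Thus $C'\setminus\{dc_0+r_0\}$ is not a complement, and $C'$ is minimal.

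No step is a real obstacle. The only care needed is in Step 2: the residue $r_0$ is pinned down modulo $d$ because every element of $dW$ is divisible by $d$. The hypotheses $\gcd(s_1,\ldots,s_k)=1$ and $d,k\geqslant2$ are not used in the argument itself; they only fix the intended use of the lemma. With it, Theorem~\ref{thm1} reduces to the case where the elements of $S$ are coprime.
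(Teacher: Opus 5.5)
Your proposal is correct and follows the paper's proof essentially verbatim. The paper uses the same complement $C_d=\bigcup_{r=0}^{d-1}(dC+r)$, the same decomposition $n=dq+r$ to show covering, and the same choice of $dq_0+r_0$ together with the residue argument modulo $d$ to show minimality. Your extra remark that non-eventual-periodicity passes from $W$ to $dW$ (any eventual period of $dW$ must be divisible by $d$) is a point the paper leaves implicit, although it is needed when the lemma is used to reduce Theorem~\ref{thm1} to the coprime case.
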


\begin{proof} It is easy to see that if $W$ is an $\{s_1,\ldots,s_k\}$-difference set, then $d W$ is an $S$-difference set.

Suppose that $C$ is a minimal additive complement to $W$.
 Let
$$C_d=\bigcup_{r=0}^{d-1}(dC + r).$$
We shall show that $C_d$ is a minimal additive complement to $d W$.

First, we prove that $C_d$ is an additive complement to $d W$.
For any $n \in \mathbb{Z}$, write $n= dq+r$, where $0 \leq r < d$. Since
$C+W=\mathbb{Z}$, there exist $w \in W$, $c \in C$ such that $w+c=q$.
Thus
$$n=dq+r=dw+dc+r\in d W+C_d.$$

Next, we show that for any $x \in C_d$, $d W+(C_d\setminus\{x\})\neq \mathbb{Z}.$

Write $x=dc_0+r_0$ for some $c_0 \in C$ and $0 \leq r_0 < d$. Since
$W+(C\setminus\{c_0\})\neq \mathbb{Z}$, there exists an integer $q_0\notin W+(C\setminus\{c_0\})$.
Then $$dq_0+r_0\notin d W+(C_d\setminus\{x\}).$$
Otherwise, if there exist $w'\in W$, $c'\in C$ and $0\leqslant r'\leqslant d-1$ such that $$dq_0+r_0=dw'+dc'+r',$$
then $w'+c'=q_0$ and $r'=r_0.$
It follows that $c'\neq c_0$, which contradicts that $q_0\notin W+(C\setminus\{c_0\})$.

  This completes the proof of Lemma \ref{lem2}.
\end{proof}

\section{Proof of Theorem \ref{thm1}}
By Lemma~\ref{lem2}, it is sufficient to prove Theorem \ref{thm1} holds for$$\gcd(s_1,\ldots,s_l)=1.$$
We split the proof into two cases: $s_1=1$ and $s_1\geqslant 2$.
Although the proof strategies are parallel, the concrete constructions differ in many details,
therefore we present them independently for better readability.
Write $$s=s_2+\cdots+s_{l-1}+s_1s_l.$$
\subsection{Case 1: $s_1=1$.}
 We construct sequences $\{d_i\}_{i=1}^\infty$,
$\{W_i\}_{i=1}^\infty$ and $\{c_i\}_{i=1}^\infty$ inductively.

 Put
$$d_1 = -1,\qquad c_1 = -2s-2,\qquad W_1 = \{0,1,\dots,4s+4\}.$$
 For $i=2,3,\ldots$, let $C_{i-1}=\{c_1,\dots,c_{i-1}\}$ and define
$$d_i = \max\{\,n<0 : n\notin W_{i-1}+C_{i-1}\,\},$$
choose $c_i$ such that
\begin{equation}\label{eq3.1}c_i\leqslant d_i + 2c_{i-1} - 3s.\end{equation}

For each $i\ge 2$, we construct three disjoint sets
$V_{i-1}^{(1)}, V_{i-1}^{(2)}, V_{i-1}^{(3)}$ such that their union
with $W_{i-1}$ forms $W_i$.

Step 1. Construct the elements of $W$ which lie in $(-2c_{i-1},\, d_i-c_i]$. Write $$x_i:=d_i-c_i+2c_{i-1}-s_l.$$
By (\ref{eq3.1}) we have
$$x_i>d_i-c_i+2c_{i-1}-s\geqslant 2s.$$

Choose
\begin{equation}\label{eq3.2}V_{i-1}^{(1)}=\left\{-2c_{i-1}+s_l\right\}
\cup\left\{ -2c_{i-1}+s_l+t:t=1,\ldots,x_{i}\right\}
\end{equation}
Then $\max V_{i-1}^{(1)} = d_i-c_i$.

By the construction of $V_{i-1}^{(1)}$, we have the following fact:

{\bf FACT I}. For all $i\geqslant 1$, $d_{i+1}-d_i \leqslant -2s-2$.

 Since $c_1+W_1 = [-2s-2,\,2s+2]$, we have $d_2 = -2s-3$. Thus $d_2-d_1=-2s-2$. For each $i=2,\ldots,$
by the construction of $V_{i-1}^{(1)}$ in Step 1, we have
\begin{equation}\label{Prop-1}
d_i-c_i-n\in V_{i-1}^{(1)}\qquad\text{for }n=0,\dots,2s+1,\tag{Prop-1}\end{equation}
hence,
$$\{d_i, d_i-1,\ldots,d_i-2s-1\}\subseteq c_i+V_{i-1}^{(1)}.$$
So, $d_{i+1}-d_i\leqslant -2s-2$.

Step 2. Construct the elements of $W$ which lie in $(d_i-c_i,\, d_1-c_i-1]$.

We first construct the elements of $V_{i-1,0}^{(2)}$ which fall in each $(d_i-c_i, d_{i-1}-c_i-1]$.

By FACT I, we have
$$d_{i-1}-c_{i}-1-\left(d_{i}-c_{i}\right)=d_{i-1}-d_{i}-1\geqslant 2s+1,$$
combining with Lemma \ref{lem1}, there exist positive integers $x_{i,j}(j=1,2,\ldots,l)$ such that
$$d_{i-1}-c_{i}-1-\left(d_{i}-c_{i}\right)=s_1x_{i,1}+s_2x_{i,2}+\cdots+s_lx_{i,l}.$$
Choose \begin{eqnarray}\label{eq3.3}V_{i-1,0}^{(2)}&=&\left\{d_{i}-c_{i}+\lambda_{1,0}: \lambda_{i,1}=1,\ldots,x_{i,1}\right\}
\\&\cup&\left\{d_{i}-c_{i}+x_{i,1}+s_2\lambda_{i,2}: \lambda_{i,2}=1,\ldots,x_{i,2}\right\}\cup\cdots\nonumber
\\&\cup&\left\{d_{i}-c_{i}+x_{i,1}+s_2x_{i,2}+\cdots+s_l\lambda_{i,l}: \lambda_{i,l}=1,\ldots,x_{i,l}\right\}.\nonumber\end{eqnarray}

For $i=3,\ldots$, noting that $$(d_{i-1}-c_{i}-1, d_{1}-c_{i}-1]=\bigcup\limits_{u=1}^{i-2}(d_{i-u}-c_{i}-1, d_{i-u-1}-c_{i}-1],$$
we will proceed to choose all the elements of $V_{i-1,u}^{(2)}$ which fall in $(d_{i-u}-c_{i}-1, d_{i-u-1}-c_{i}-1]$ for all $u=1,\ldots, i-2$.

By FACT I, we have
$$d_{i-u-1}-c_{i}-1-\left(d_{i-u}-c_{i}-1\right)=d_{i-u-1}-d_{i-u}>s,$$
combining with Lemma \ref{lem1}, there exist positive integers $y_{i,j}^{(u)}(j=1,2,\ldots,l)$ such that
$$d_{i-u-1}-c_{i}-1-\left(d_{i-u}-c_{i}-1\right)=s_1y_{i,1}^{(u)}+s_2y_{i,2}^{(u)}+\cdots+s_ly_{i,l}^{(u)}.$$
For each $u=1,\ldots,i-2$, choose \begin{eqnarray}\label{eeq3.4}V_{i-1,u}^{(2)}&=&\left\{d_{i-u}-c_{i}-1+s_l\lambda_{i,l}^{(u)}: \lambda_{i,l}^{(u)}=1,\ldots,y_{i,l}^{(u)}\right\}
\\&\cup&\left\{d_{i-u}-c_{i}+1+s_l y_{i,l}^{(u)}+s_1\lambda_{i,1}^{(u)}: \lambda_{i,1}^{(u)}=1,\ldots,y_{i,1}^{(u)}\right\}\cup\cdots\nonumber
\\&\cup&\left\{d_{i-u}-c_{i}+1+s_l y_{i,l}^{(u)}+s_1 y_{i,1}^{(u)}+\cdots+s_{l-1} \lambda_{i,l-1}^{(u)}: \lambda_{i,l-1}^{(u)}=1,\ldots,y_{i,l-1}^{(u)}\right\}.\nonumber\end{eqnarray}
By (\ref{eeq3.4}), we know that the smallest element in $V_{i-1,u}^{(2)}$ is $d_{i-u}-c_{i}+s_l-1$. Thus, for all $u=1,\ldots, i-2$, we have
\begin{equation}\label{Prop-2} d_{i-u}-c_{i}\notin V_{i-1,u}^{(2)}.\tag{Prop-2}\end{equation}

Write $$V_{i-1}^{(2)}=\bigcup\limits_{u=0}^{i-2}V_{i-1,u}^{(2)}.$$
By (\ref{eq3.3}) and (\ref{eeq3.4}), we have $$\max V_{i-1}^{(2)}=d_1-c_i-1, \quad i=2,3,\ldots$$

Step 3. Construct the elements of $V_{i-1}^{(3)}$ which fall in each $\left(d_1-c_i-1, -2c_{i}\right]$.

Since
$$-c_{i-1}-c_{i}-(d_1-c_{i}-1)=-c_{i-1}+2\geqslant -c_1+2>s,$$
combining with Lemma \ref{lem1}, there exist positive integers $z_{i,j}(j=1,2,\ldots,l)$ such that
$$-c_{i-1}-c_{i}-(d_1-c_{i}-1)=s_1z_{i,1}+s_2z_{i,2}+\cdots+s_lz_{i,l}.$$
Choose \begin{eqnarray}\label{eq3.5}V_{i-1}^{(3)}&=&\left\{d_1-c_i-1+s_l\beta_{i,l}:\beta_{i,l}=1,\ldots,z_{i,l}\right\}
\\&\cup&\left\{d_1-c_i-1+s_l z_{i,l}+s_{l-1}\beta_{i,l-1}:\beta_{i,l-1}=1,\ldots,z_{i,l-1}\right\}\cup \cdots\nonumber
\\ &\cup&\left\{d_1-c_i-1+s_l z_{i,l}+s_{l-1}z_{i,l-1}+\cdots+s_{2}\beta_{i,2}: \beta_{i,2}=1,\ldots,z_{i,2}\right\}\nonumber
\\&\cup&\left\{-2c_{i}-\beta: \beta=0,\ldots,-c_{i}+c_{i-1}+z_{i,1}\right\}.\nonumber\end{eqnarray}
By (\ref{eq3.5}), the smallest element in $V_{i-1}^{(3)}$ is $d_{1}-c_{i}+s_l-1$, thus
\begin{equation}\label{Prop-3} d_{1}-c_{i}\notin V_{i-1}^{(3)}.\tag{Prop-3}\end{equation}
Moreover, $$\max V_{i-1}^{(3)}=-2c_{i}, \quad i=2,3,\ldots.$$

 For $i=2,3,\dots$, let
$$W_i = W_{i-1}\cup V_{i-1}^{(1)}\cup V_{i-1}^{(2)}\cup V_{i-1}^{(3)}.$$

Write $$W = \bigcup_{i=1}^\infty W_i,\quad C = \bigcup_{i=1}^\infty \{c_i\}.$$

Now, we summarize the construction of $W$ as follows:

 \textbf{Initial stage}: $W_1 = \{0, 1, \dots, 4s+4\}$. All consecutive differences are $s_1\in S$.

 \textbf{For each $i \ge 2$}, three pairwise disjoint sets are added to $W_{i-1}$:
   \begin{itemize}
    \item $V_{i-1}^{(1)}$: Starting from $-2c_{i-1} + s_l$, add consecutive integers with step $1$ up to $d_i - c_i$. All gaps here are $1$.

        \item  For each subinterval, use Lemma~\ref{lem1} to express its length as a positive linear combination of $s_1, s_2, \dots, s_l$. Then fill the subinterval by taking steps equal to these values in the given order: for the first subinterval ($u=0$) the order is $s_1, s_2, \dots, s_l$; for later subintervals ($u=1,\ldots,i-2$) the order is $s_l, s_1, s_2, \dots, s_{l-1}$. All internal gaps belong to $S$.

        \item $V_{i-1}^{(3)}$: Again use Lemma \ref{lem1}, start with a block of step $s_l$, then step $s_{l-1}$, \dots, step $s_2$, and finally step $s_1$ down to $-2c_i$. All gaps are in $S$.
   \end{itemize}

   \textbf{Transitions}: The maximum of $V_{i-1}^{(3)}$ is $-2c_i$ and the minimum of the next $V_i^{(1)}$ is $-2c_i + s_l$; the gap equals $s_l \in S$. Other connections between blocks are similarly arranged so that every consecutive difference of $W$ lies in $S$.

In all, we know that $W$ is an \textit{$S$-difference set}. Furthermore, by (\ref{eq3.1}), we can choose suitable parameters
$c_{i}$ such that $W$ is not eventually periodic. Hence, $W$ is an \textit{INEP $S$-difference set}.

Next, we shall show that $C$ is a minimal additive complement to $W$.

Firstly, $C$ is an additive complement to $W$.
Since $d_{1}=-1$ and by FACT I, we have $d_{i+1}-d_{i}\leqslant -2s-2$ for all $i\geq 1$, thus $d_k\rightarrow-\infty$. So, we have $(-\infty,2s+2]\subseteq W+C$. For any integer $n\geqslant 2s+2$, there exists an
$i\geqslant 2$ such that $-c_{i-1}\leqslant n<-c_i$.
Hence $$-c_{i}+d_{1}<-c_{i-1}-c_{i}\leqslant n-c_{i}<-2c_{i}.$$
The difference between adjacent elements of the interval $V_{i-1}^{(3)}\cap\left[-c_{i-1}-c_{i},-2c_{i}\right]$ is 1.
Thus, $n-c_{i}\in V_{i-1}^{(3)}$. Hence, $$n = c_i+(n-c_i)\in c_i+W_i.$$ So, $W+C=\mathbb{Z}$.

Secondly, $C$ is a minimal additive complement to $W$. For any positive integer $i$, write $d_{i}=c+w$ with $c\in C$ and $w\in W$. We shall prove that $c=c_i$.

We have $d_{1}-c_{1}=2s+1\in W_{1}.$ By (Prop-1), we have \begin{equation}\label{eeq2.12}d_{i}-c_{i}\in W\text{ for all } i\geqslant 2.\end{equation}

Fix an $i\geqslant 2$, by (Prop-2) we know that for any $j\geqslant i+1$ we have
$d_{i}-c_{j}\notin V_{j-1}^{(2)}.$
By (Prop-3), for all $j\geqslant 2$, we have $d_{1}-c_{j}\notin V_{j-1}^{(3)}.$
Hence, for any fixed $i\geqslant 1$, if $j\geqslant i+1$, then \begin{equation}\label{eeq2.13}d_{i}-c_{j}\notin W.\end{equation}
That is, if $d_{i}=c+w$, then $c\neq c_{j}$ for all positive integers $j>i$.

By the construction of $V_{i-1}^{(1)}$ in Step 1, we have
$$\min (W\backslash W_{i-1})= -2c_{i-1}+s_l,$$
thus for all $j\leqslant i-1$, we have $$d_{i}-c_{j}\leqslant d_{i}-c_{i-1}<-2c_{i-1},$$
it follows that for all positive integer $j\leqslant i-1$, we have $$d_{i}-c_{j}\notin W\backslash W_{i-1}.$$
By the definition of $d_i$, we know that $$d_{i}\notin W_{i-1}+\left\{c_{1},\ldots, c_{i-1}\right\}.$$
Hence, \begin{equation}\label{eeq2.14}d_{i}\notin W+\left\{c_{1}, \ldots, c_{i-1}\right\}.\end{equation}

So, by (\ref{eeq2.12})-(\ref{eeq2.14}), we know that if $d_{i}=c+w$, then $c=c_{i}$.

Therefore, $C$ is a minimal complement to $W$.

This completes the proof of the case $s_1=1$.

\subsection{Case 2: $s_1\geqslant 2$.}

Since $\gcd(s_1,s_2,\ldots,s_l)=1$ and $l\geqslant 2$, there exists an integer $2\leqslant j\leqslant l$ such that $s_1\nmid s_j$. Write $$s_j=ks_1+r,\quad 1\leqslant r\leqslant s_1-1.$$
Define
\[h(s_1;r):=\left\{
\begin{array}{ll}s_1-1, &\text{ if } r=1,\\
\left\lfloor\displaystyle\frac{s_1}{r}\right\rfloor, &\text{ if } s_1\equiv 1\pmod r,\\
\left\lceil\displaystyle\frac{s_1}{r}\right\rceil,&\text{ otherwise}.
\end{array}
\right. \]

We construct sequences $\{d_{i}\}^{\infty}_{i=1}, \{W_{i}\}^{\infty}_{i=1}$ and $\left\{c_{i}^{(1)}, \ldots,c_{i}^{(s_1)}\right\}^{\infty}_{i=1}$ inductively.

Put $$d_{1}=-1,\; c_{1}^{(\delta)}=-s_1s-\delta,\; \delta=1,\ldots,s_1,\;W_{1}=\{s_1\theta: \theta=0,1,\ldots,2s+2\}.$$
For $i=2,3\ldots$, write $$C_{i-1}=\left\{c_{1}^{(1)} ,\ldots,c_{1}^{(s_1)}, \ldots, c_{i-1}^{(1)}, \ldots,c_{i-1}^{(s_1)}\right\}.$$

Define \begin{equation}\label{eq4.3}d_{i}=\max\left\{n<0: n\notin W_{i-1}+C_{i-1}\right\}-(r-1).\end{equation}
Choose $c_{i}^{(1)}$ satisfying \begin{equation}\label{eq4.4}c_{i}^{(1)}\equiv d_{i}+2c_{i-1}^{(s_1)} \pmod{s_1},\end{equation}
and
\begin{equation}\label{eq4.5}c_{i}^{(1)}\leqslant\min\left\{d_{i}+2c_{i-1}^{(s_1)}-2s_1 s, \;3c_{i-1}^{(1)}-2(s_1-1)-s_1s_j\right\}.\end{equation}
For $\delta=2,\dots,s_1$, define $c_{i}^{(\delta)}=c_{i}^{(1)}-\delta+1.$

For each $i=2,3,\ldots$, we construct three disjoint sets
$V_{i-1}^{(1)}, V_{i-1}^{(2)}, V_{i-1}^{(3)}$ such that their union
with $W_{i-1}$ forms $W_i$.

Step 1. Construct the elements of $W$ which fall in each $\left(-2c_{i-1}^{(s_1)}, d_i-c_{i}^{(1)}+s_1\right]$.

By (\ref{eq4.4}) and (\ref{eq4.5}), we have $$d_{i}-c_{i}^{(1)}+2c_{i-1}^{(s_1)}-s_1s_j\equiv 0 \pmod{s_1}$$ and
$$d_{i}-c_{i}^{(1)}+2c_{i-1}^{(s_1)}-s_1s_j>s_1s,$$
thus there exists $x_{i}> s$ such that
$d_{i}-c_{i}^{(1)}+2c_{i-1}^{(s_1)}-s_1s_j=s_1x_{i}.$
Choose
\begin{eqnarray}\label{eeq2.1}V_{i-1}^{(1)}&=&\left\{-2c_{i-1}^{(s_1)}+s_j\lambda:\lambda=1,\ldots,s_1\right\}\\ \nonumber
&\cup&\left\{ -2c_{i-1}^{(s_1)}+s_1s_j+s_1t:t=1,\ldots,x_{i}\right\}
\cup\left\{d_i-c_{i}^{(1)}+s_1\right\}. \nonumber
\end{eqnarray}

Noting that the largest element and the second largest element of the set $V_{i-1}^{(1)}$ are $d_i-c_{i}^{(1)}+s_1$ and $d_i-c_{i}^{(1)}$, respectively, thus\begin{equation}\label{iv-b}d_{i}-c_{i}^{(\delta)}=d_{i}-c_{i}^{(1)}+\delta-1\not\in V_{i-1}^{(1)}, \;\delta=2,\ldots,s_1. \tag{iv-a}\end{equation}
Moreover, we have the following fact:

{\bf FACT II} For all $i\geqslant 1$, we have $d_{i+1}-d_{i}\leqslant -s_1s-s_1-r+1$.

Since $$W_{1}+\left\{c_{1}^{(1)},\ldots,c_{1}^{(s_1)}\right\}=[-s_1s-s_1,s_1s+2s_1-1],$$
we have $d_{2}=-s_1s-s_1-r$, thus $d_{2}-d_{1}=-s_1s-s_1-r+1$.

For all $i\geqslant 2$, we have \begin{equation}\label{iv-a}d_{i}-c_{i}^{(1)}-s_1 n\in V_{i-1}^{(1)}, \quad n=0,\ldots,s.\tag{iv-b}\end{equation}
Noting that
\begin{eqnarray*}
d_{i}-s_1 n&=&(d_{i}-c_{i}^{(1)}-s_1n)+c_{i}^{(1)},\nonumber\\
d_{i}-(s_1 n+1)&=&(d_{i}-c_{i}^{(1)}-s_1 n)+c_{i}^{(2)},\\
\ldots\nonumber\\
d_{i}-(s_1n+s_1-1)&=&(d_{i}-c_{i}^{(1)}-s_1 n)+c_{i}^{(s_1)},\nonumber
\end{eqnarray*}
thus, for each $n=0,\ldots,s$, we have
\begin{equation}\label{eq4.6}\{d_{i}-s_1n,\dots,d_{i}-(s_1n+s_1-1)\}\subseteq V_{i-1}^{(1)}+\left\{c_{i}^{(1)},\dots,c_{i}^{(s_1)}\right\}.\end{equation}
Moreover, if $r\geqslant 2$, then
$$d_{i}+m=(d_{i}-c_{i}^{(1)}+s_1)+c_{i}^{(s_1+1-m)}, \;m=1,\ldots,r-1.$$
That is,
\begin{equation}\label{eq4.7}\{d_{i}+1,\dots,d_{i}+r-1\}\subseteq V_{i-1}^{(1)}+\left\{c_{i}^{(1)},\dots,c_{i}^{(s_1)}\right\},\end{equation}
By (\ref{eq4.6}) and (\ref{eq4.7}), we have $$d_{i+1}-d_{i}\leqslant -s_1s-s_1-r+1.$$

Hence, for all $i\geqslant 1$, we have $d_{i+1}-d_{i}\leqslant -s_1s-s_1-r+1$.

Step 2. Construct all the elements of $W$ which fall in $\left(d_i-c_{i}^{(1)}+s_1, d_1-c_{i}^{(1)}-1\right]$.

 For each $i=2,3,\ldots$, we first construct the elements of $W$ which fall in each $\left(d_i-c_{i}^{(1)}+s_1, d_{i-1}-c_{i}^{(1)}-1\right]$.

By FACT II and the definition of $h(s_1;r)$, it is easy to verify that $$d_{i-1}-c_{i}^{(1)}-1-s_jh(s_1;r)-\left(d_{i}-c_{i}^{(1)}\right)>s.$$
By Lemma \ref{lem1}, there exist positive integers $x_{i,j}(j=1,2,\ldots,l)$ such that
$$d_{i-1}-c_{i}^{(1)}-1-s_jh(s_1;r)-\left(d_{i}-c_{i}^{(1)}\right)=s_1x_{i,1}+s_2x_{i,2}+\cdots+s_lx_{i,l}.$$
Choose \begin{eqnarray}\label{eeq2.4}V_{i-1,0}^{(2)}&=&\left\{d_{i}-c_{i}^{(1)}+s_1\lambda_{i,1}: \lambda_{i,1}=1,\ldots,x_{i,1}\right\}\nonumber
\\&\cup&\left\{d_{i}-c_{i}^{(1)}+s_1x_{i,1}+s_2\lambda_{i,2}: \lambda_{i,2}=1,\ldots,x_{i,2}\right\}\cup\cdots
\\&\cup& \left\{d_{i}-c_{i}^{(1)}+s_1x_{i,1}+s_2x_{i,2}+\cdots+s_l\lambda_{i,l}: \lambda_{i,l}=1,\ldots,x_{i,l}\right\}\nonumber\\
&\cup& \left\{ d_{i-1}-c_{i}^{(1)}-1-s_j\lambda:\lambda=0,\dots,h(s_1;r)\right\}.\nonumber\end{eqnarray}

For $i\geq 3$, noting that
$$\left[d_{i-1}-c_{i}^{(1)}, d_{1}-c_{i}^{(1)}-1\right]=\bigcup\limits_{u=1}^{i-2}\left[d_{i-u}-c_{i}^{(1)}, d_{i-u-1}-c_{i}^{(1)}-1\right],$$
we will proceed to choose all the elements of $W$ which fall in $\left[d_{i-u}-c_{i}^{(1)}, d_{i-u-1}-c_{i}^{(1)}-1\right]$ for all $u=1,\ldots, i-2$.

Again by FACT II and the definition of $h(s_1;r)$, we have
\begin{eqnarray*}
d_{i-u-1}-c_{i}^{(1)}-1-s_j h(s_1;r)-\left(d_{i-u}-c_{i}^{(1)}-1\right)>s.\end{eqnarray*}
By Lemma \ref{lem1}, there exist positive integers $y_{i,j}^{(u)}(j=1,2,\ldots,l)$ such that
\begin{eqnarray*}
&&d_{i-u-1}-c_{i}^{(1)}-1-s_j h(s_1;r)-(d_{i-u}-c_{i}^{(1)}-1)\\
&&=s_1y_{i,1}^{(u)}+s_2y_{i,2}^{(u)}+\cdots+s_l y_{i,l}^{(u)}.\end{eqnarray*}

Thus,
\begin{eqnarray}\label{eeq2.5}&&V_{i-1,u}^{(2)}=\left\{d_{i-u}-c_{i}^{(1)}-1+s_j \lambda_{i,j}^{(u)}:\lambda_{i,j}^{(u)}=1,\ldots,y_{i,j}^{(u)}\right\}\\ \nonumber
&\cup&\left\{ d_{i-u}-c_{i}^{(1)}-1+s_j y_{i,j}^{(u)}+s_1 \lambda_{i,1}^{(u)}:\lambda_{i,1}^{(u)}=1,\ldots,y_{i,1}^{(u)}\right\}\cup \cdots\\ \nonumber
&\cup&\left\{d_{1}-c_{i}^{(1)}-1+s_j y_{i,j}^{(u)}+s_1y_{i,1}^{(u)}+\cdots+s_{j-1}\lambda_{i,j-1}^{(u)}:\lambda_{i,j-1}^{(u)}=1,\ldots,y_{i,j-1}^{(u)}\right\}\cup \cdots\\ \nonumber
&\cup&\left\{d_{1}-c_{i}^{(1)}-1+s_j y_{i,j}^{(u)}+\cdots+s_{j-1}y_{i,j-1}^{(u)}+s_{j+1}y_{i,j+1}^{(u)}+\cdots+s_{l}\lambda_{i,l}^{(u)}:\lambda_{i,l}^{(u)}=1,\ldots,y_{i,l}^{(u)}\right\}\\ \nonumber
&\cup&\left\{ d_{i-u-1}-c_{i}^{(1)}-1-s_j \theta:\theta=0,\dots,h(s_1;r)\right\}.\nonumber\end{eqnarray}
By (\ref{eeq2.5}), we know that the smallest element in $V_{i-1,u}^{(2)}$ is $d_{i-u}-c_{i}^{(1)}+s_j-1$, thus, we have
\begin{equation}\label{iv-c} d_{i-u}-c_{i}^{(\delta)}\notin V_{i-1,u}^{(2)}, \quad \delta=1,\ldots,s_1.\tag{iv-c}\end{equation}

Write $$V_{i-1}^{(2)}=\bigcup\limits_{u=0}^{i-2}V_{i-1,u}^{(2)}.$$
Then $$\max V_{i-1}^{(2)}=d_1-c_{i}^{(1)}-1, \quad i=2,3,\ldots$$

Step 3. Construct the elements of $W$ which fall in each $\left(d_{1}-c_{i}^{(1)}-1, -2c_{i}^{(s_1)}\right]$.

Let $a_{i}$ be the least nonnegative integer of $-2c_{i}^{(s_1)}+c_{i-1}^{(s_1)}+c_{i}^{(1)}$ modulo $s_1$. Noting that $$-c_{i-1}^{(s_1)}-c_{i}^{(1)}-(s_1-a_i)-\left(d_{1}-c_{i}^{(1)}-1\right)>s,$$
by Lemma \ref{lem1}, there exist positive integers $z_{i,j}(j=1,2,\ldots,l)$ such that
\begin{eqnarray*}
&&-c_{i-1}^{(s_1)}-c_{i}^{(1)}-(s_1-a_i)-\left(d_{1}-c_{i}^{(1)}-1\right)\\
&&=s_1z_{i,1}+s_2z_{i,2}+\cdots+s_l z_{i,l}.\end{eqnarray*}

Choose
\begin{eqnarray}\label{eeq2.6}&&V_{i-1}^{(3)}=\left\{d_{1}-c_{i}^{(1)}-1+s_j\lambda_{i,j}:\lambda_{i,j}=1,\ldots,z_{i,j}\right\}\\ \nonumber
&\cup&\left\{d_{1}-c_{i}^{(1)}-1+s_j z_{i,j}+s_1\lambda_{i,1}:\lambda_{i,1}=1,\ldots,z_{i,1}\right\}\cup \cdots\\ \nonumber
&\cup&\left\{d_{1}-c_{i}^{(1)}-1+s_j z_{i,j}+s_1z_{i,1}+\cdots+s_{j-1}\lambda_{i,j-1}:\lambda_{i,j-1}=1,\ldots,z_{i,j-1}\right\}\cup \cdots\\ \nonumber
&\cup&\left\{d_{1}-c_{i}^{(1)}-1+s_j z_{i,j}+\cdots+s_{j-1}z_{i,j-1}+s_{j+1}z_{i,j+1}+\cdots+s_{l}\lambda_{i,l}:\lambda_{i,l}=1,\ldots,z_{i,l}\right\}\\ \nonumber
&\cup&\left\{-2c_{i}^{(s_1)}-s_1 \beta:\beta=0,\dots,\frac{-2c_{i}^{(s_1)}+c_{i-1}^{(s_1)}+c_{i}^{(1)}+(s_1-a_i)}{s_1}\right\}.\nonumber\end{eqnarray}
Then $$\max V_{i-1}^{(3)}=-2c_{i}^{(s_1)}, \quad i=2,3,\ldots$$

By the construction of $V_{i-1}^{(3)}$ in (\ref{eeq2.6}), we know that the smallest element in $V_{i-1}^{(3)}$ is $d_{1}-c_{i}^{(1)}+s_j-1$, thus, we have
\begin{equation}\label{iv-d}d_{1}-c_{i}^{(\delta)}\notin V_{i-1}^{(3)}, \quad \delta=1,\ldots,s_1.\tag{iv-d}\end{equation}

For $i=2,3,\ldots$, let
\begin{equation}\label{eq520}W_{i}=W_{i-1}\cup V_{i-1}^{(1)}\cup V_{i-1}^{(2)}\cup V_{i-1}^{(3)},\end{equation}
$$C_{i-1}=\left\{c_{1}^{(1)} ,\ldots,c_{1}^{(s_1)}, \ldots, c_{i-1}^{(1)}, \ldots,c_{i-1}^{(s_1)}\right\}.$$
Write $$W=\bigcup\limits_{i=1}^{\infty}W_{i}, \quad C=\bigcup\limits_{i=1}^{\infty}\left\{c_{i}^{(1)},\dots,c_{i}^{(s_1)}\right\}.$$

Now, we summarize the construction of $W$ as follows:

\textbf{Initial stage:}
\(W_1 = \{ s_1\theta : \theta = 0,1,\dots,2s+2\}\). All consecutive differences are \(s_1\in S\).

\textbf{For each \(i \ge 2\)}, three pairwise disjoint sets are added to \(W_{i-1}\):
\begin{itemize}
    \item \(V_{i-1}^{(1)}\): Starting from \(-2c_{i-1}^{(s_1)}+s_j\), take \(s_1\) steps of size \(s_j\), then continue with steps of size \(s_1\) up to \(d_i-c_i^{(1)}\), and finally add the single element \(d_i-c_i^{(1)}+s_1\). All internal gaps belong to \(\{s_1,s_j\}\subseteq S\).

    \item \(V_{i-1}^{(2)}\): For each subinterval, use Lemma~\ref{lem1} to express its length as a positive linear combination of \(s_1,s_2,\dots,s_l\). Then fill the subinterval by taking steps equal to these values in the prescribed order.
    All internal gaps belong to \(S\).

    \item \(V_{i-1}^{(3)}\): Again use Lemma~\ref{lem1}, start with a block of step \(s_j\), then step \(s_1\), then step \(s_2\), \dots, step \(s_{l}\), and finally again with step \(s_1\) to \(-2c_i^{(s_1)}\).
\end{itemize}

\textbf{Transitions:} The maximum of \(V_{i-1}^{(3)}\) is \(-2c_i^{(s_1)}\) and the minimum of the next \(V_i^{(1)}\) is \(-2c_i^{(s_1)}+s_j\); the gap equals \(s_j\in S\). Other connections between blocks are similarly arranged so that every consecutive difference of \(W\) lies in \(S\).

In all, \(W\) is an \textit{\(S\)-difference set}. By choosing suitable parameters \(c_i^{(1)}\) we can ensure that \(W\) is not eventually periodic. Hence \(W\) is an \textit{INEP \(S\)-difference set}.

Next, we shall show that $C$ is a minimal additive complement to $W$.

Since $d_{1}=-1$ and $d_{i+1}-d_{i}\leqslant -s_1s-s_1-r+1$ for all $i\geq 1$, thus $d_k\rightarrow-\infty$. So, we have $(-\infty,s_1s+2s_1-1]\subseteq W+C$. For any integer
$n\geqslant s_1s+2s_1$, there exists an $i\geqslant 2$ such that $-c_{i-1}^{(s_1)}\leq n<-c_{i}^{(s_1)}$. Hence
$$-c_{i}^{(1)}+d_{1}<-c_{i-1}^{(s_1)}-c_{i}^{(1)}\leqslant n-c_{i}^{(1)}<\dots<n-c_{i}^{(s_1)}<-2c_{i}^{(s_1)}.$$
The difference between adjacent elements of the interval $\left[-c_{i-1}^{(s_1)}-c_{i}^{(1)},-2c_{i}^{(s_1)}\right]$ is $s_1$.
Thus, exactly one of ${n-c_{i}^{(1)},\dots,n-c_{i}^{(s_1)}}$ is in $V_{i-1}^{(3)}$. Hence, $$ n\in W_{i}+\left\{c_{i}^{(1)}, \dots,c_{i}^{(s_1)}\right\}.$$
So, $W+C=\mathbb{Z}$.

To prove $C$ is minimal, we first derive some properties from the construction of $W$.

Fix an $i\geqslant 2$, for $\delta=1,\dots,s_1$, let $\delta_r$ be the least nonnegative residue of $\delta$ modulo $r$. In particular, we write $\delta_r=1$ if $r=1$.

If $\delta_r\in \left\{0,1\right\}$, then by (iv-a) and (iv-b), we have
\begin{eqnarray*}
&&d_{i}-s_j\frac{\delta-\delta_r}{r}-(\delta_r-1)-c_{i}^{(\delta)}\\
&&=d_{i}-\left(ks_1\frac{\delta-\delta_r}{r}+\delta-1\right)-c_{i}^{(\delta)}\\
&&=d_{i}-c_{i}^{(1)}-ks_1\frac{\delta-\delta_r}{r}\in V_{i-1}^{(1)},
\end{eqnarray*}
and\begin{equation}\label{iv-e-1}d_{i}-s_j\frac{\delta-\delta_r}{r}-(\delta_r-1)-c_{i}^{(\nu)}\notin V_{i-1}^{(1)}, \quad \nu\neq \delta.\tag{iv-e-1}\end{equation}

By the construction of $V_{i-1}^{(2)}$ and $V_{i-1}^{(3)}$, we know that the distance of the adjacent element in $$ \left[d_{i-1}-c_{i}^{(1)}-1-s_jh(s_1;r), d_{i-1}-c_{i}^{(1)}+s_j-1\right]$$ is $s_j$,
thus, for any $0\leqslant \lambda \leqslant h(s_1;r)$, we have
\begin{align}&d_{i-1}-c_{i}^{(\delta)}-s_j\lambda-(\delta_r-1)\tag{iv-f-1}\\
=&d_{i-1}-c_{i}^{(1)}-1-s_j\lambda+\delta+1-\delta_r\notin V_{i-1}^{(2)}\cup V_{i-1}^{(3)},\nonumber
\end{align}
where $\delta=1,\ldots,s_1.$

If $i\geq 3$, then by the construction of (\ref{eeq2.5}) and (\ref{eeq2.6}), for all $u=1,\ldots, i-2$, we know that the distance of the adjacent element in$$\left[d_{i-u-1}-c_{i}^{(1)}-1-s_jh(s_1;r), d_{i-u-1}-c_{i}^{(1)}+s_j-1\right]$$
is $s_j$, thus, for any $0\leqslant \lambda \leqslant h(s_1;r)$, we also have
\begin{align}&d_{i-u-1}-c_{i}^{(\delta)}-s_j\lambda-(\delta_r-1)\tag{iv-g-1}\\
=&d_{i-u-1}-c_{i}^{(1)}-1-s_j\lambda+\delta+1-\delta_r\notin V_{i-1}^{(2)}\cup V_{i-1}^{(3)},\nonumber
\end{align}
where $\delta=1,\ldots,s_1.$

If $\delta_r\in \left\{2,\dots,r-1\right\}$, then by (iv-a) and (iv-b), we have
\begin{eqnarray*}
&&d_{i}-s_j\frac{\delta-\delta_r+r}{r}+(r+1-\delta_r)-c_{i}^{(\delta)}\\
&&=d_{i}-\left(ks_1\frac{\delta-\delta_r+r}{r}+\delta-1\right)-c_{i}^{(\delta)}\\
&&=d_{i}-c_{i}^{(1)}-ks_1\frac{\delta-\delta_r+r}{r}\in V_{i-1}^{(1)},
\end{eqnarray*}
and
\begin{equation}\label{iv-e-2}d_{i}-s_j\frac{\delta-\delta_r+r}{r}+(r+1-\delta_r)-c_{i}^{(\nu)}\notin V_{i-1}^{(1)}, \quad \nu\neq \delta.\tag{iv-e-2}\end{equation}

By the construction of $V_{i-1,0}^{(2)}$ in (\ref{eeq2.4}), we know that the distance of the adjacent element in $$V_{i-1,0}^{(2)}\cap\left[d_{i-1}-c_{i}^{(1)}-1-s_jh(s_1;r), d_{i-1}-c_{i}^{(1)}-1\right]$$ is $s_j$, thus for any $1\leqslant \lambda \leqslant h(s_1;r)$, we have
\begin{align}&d_{i-1}-c_{i}^{(\delta)}-s_j\lambda+(r+1-\delta_r)\tag{iv-f-2}\\
=&d_{i-1}-c_{i}^{(1)}-1-s_j\lambda+\delta+r+1-\delta_r\notin V_{i-1,0}^{(2)},\nonumber
\end{align}
where $\delta=1,\ldots,s_1.$

If $i\geq 3$, then by (\ref{eeq2.5}), for all $u=1,\ldots, i-2$, we know that the distance of the adjacent element in$$V_{i-1,u}^{(2)}\cap\left[d_{i-u-1}-c_{i}^{(1)}-1-s_jh(s_1;r), d_{i-u-1}-c_{i}^{(1)}-1\right]$$
is $s_j$, thus for any $1\leqslant \lambda\leqslant h(s_1,r)$, we have
\begin{align}&d_{i-u-1}-c_{i}^{(\delta)}-s_j\lambda+(r+1-\delta_r)\tag{iv-g-2}\\
=&d_{i-u-1}-c_{i}^{(1)}-1-s_j\lambda+\delta+r+1-\delta_r\notin V_{i-1,u}^{(2)},\nonumber
\end{align}
where $\delta=1,\ldots,s_1.$

Next, we shall prove the following important properties:

For any $1\leqslant\delta\leqslant s_1$ and $i\geqslant 2$, we have
\begin{equation}\label{iv-h-1}d_{i}-s_j\frac{\delta-\delta_r}{r}-(\delta_r-1)\notin W_{i-1}+C_{i-1}, \text{ if }\delta_r\in \{0,1\}.\tag{iv-h-1}\end{equation}
\begin{equation}\label{iv-h-2}d_{i}-s_j\frac{\delta-\delta_r+r}{r}+(r+1-\delta_r)\notin W_{i-1}+C_{i-1}, \text{ if }\delta_r\in \{2,\ldots,r-1\}.\tag{iv-h-2}\end{equation}

 Indeed, we have $$W_{1} + C_{1} = \left[-s_1s-s_1,\; s_1s+2s_1-1\right], \quad d_{2}=-s_1s-s_1-r.$$
Thus,
$$d_{2} -s_j\frac{\delta-\delta_r}{r}-(\delta_r-1) \notin W_{1} + C_{1}, \text{ if }\delta_r\in \{0,1\}.$$
$$d_{2}-s_j\frac{\delta-\delta_r+r}{r}+(r+1-\delta_r) \notin W_{1} + C_{1}, \text{ if }\delta_r\in \{2,\ldots,r-1\}.$$

For $i=2,3,\ldots$, by (\ref{eq520}) we have
\begin{eqnarray}\label{eq5201} &&W_i+C_i=(W_{i-1}\cup V_{i-1})+(C_{i-1}\cup \{c_{i}^{(1)}, \ldots, c_{i}^{(s_1)}\})\nonumber\\
&=&(W_{i-1}+C_{i-1})\cup (V_{i-1}+C_{i-1})\cup (W_{i-1}+\{c_{i}^{(1)}, \ldots, c_{i}^{(s_1)}\})\\
&\cup &(V_{i-1}^{(1)}+\{c_{i}^{(1)}, \ldots, c_{i}^{(s_1)}\})\cup (V_{i-1}^{(2)}+\{c_{i}^{(1)}, \ldots, c_{i}^{(s_1)}\})\cup (V_{i-1}^{(3)}+\{c_{i}^{(1)}, \ldots, c_{i}^{(s_1)}\}),\nonumber
\end{eqnarray}
where $V_{i-1}=V_{i-1}^{(1)}\cup V_{i-1}^{(2)}\cup V_{i-1}^{(3)}$.
Moreover, $$\min (V_{i-1}+C_{i-1})=-2c_{i-1}^{(s_1)}+c_{i-1}^{(s_1)}+s_j>0,$$
\begin{eqnarray*}&&\max(W_{i-1}+\{c_{i}^{(1)}, \ldots, c_{i}^{(s_1)}\})=-2 c_{i-1}^{(s_1)}+c_i^{(1)}\\
&<&-2 c_{i-1}^{(s_1)}+s_j+c_i^{(s_1)}
=\min(V_{i-1}^{(1)}+\{c_{i}^{(1)}, \ldots, c_{i}^{(s_1)}\}).\end{eqnarray*}

Combining with the construction in Step 1, write
$$w^{(i-1)}=-2c_{i-1}^{(s_1)}+s_1s_j, \quad i=2,3,\ldots$$

Thus,
\begin{eqnarray}\label{eq5202}&&V_{i-1}^{(1)}+\left\{c_{i}^{(1)}, \ldots, c_{i}^{(s_1)}\right\}\\
&=&\bigcup_{\lambda=1}^{s_1-1}\left[w^{(i-1)}-\lambda s_j+c_{i}^{(s_1)},w^{(i-1)}-\lambda s_j+c_{i}^{(1)}\right]\cup \left[w^{(i-1)}+c_{i}^{(s_1)},d_i+s_1\right].\nonumber
\end{eqnarray}

By (\ref{eq4.5}), we have
\begin{eqnarray*}w^{(i-1)}+c_{i}^{(s_1)}&=&c_{i}^{(s_1)}-2c_{i-1}^{(s_1)}+s_1s_j\\
&=&c_{i}^{(1)}-(s_1-1)-2(c_{i-1}^{(1)}-(s_1-1))+s_1s_j\\
&\leq &c_{i-1}^{(s_1)}=\min (W_{i-1}+C_{i-1}).
\end{eqnarray*}
Combining with (\ref{eq5201}) and (\ref{eq5202}), we know that $$d_{i+1}=w^{(i-1)}+c_{i}^{(s_1)}-r,$$
thus$$d_{i+1}-s_j\frac{\delta-\delta_r}{r}-(\delta_r-1)\notin W_{i}+C_{i}, \text{ if } \delta_r\in \left\{0,1\right\}.$$
$$d_{i+1}-s_j\frac{\delta-\delta_r+r}{r}+(r+1-\delta_r)\notin W_{i}+C_{i}, \text{ if } \delta_r\in \left\{2,\ldots, r-1\right\}.$$

Now, we prove that the complement $C$ is minimal. For $\delta=1,\ldots,s_1$, we divide into two cases:

{\bf Case 1.} $ \delta_r\in \left\{0,1\right\}$. For each $i=1,2,\ldots,$ write
\begin{equation}\label{eq2.7}d_{i}-s_j\frac{\delta-\delta_r}{r}-(\delta_r-1)=c+w, \quad c\in C, w\in W,\end{equation}
 we shall prove that $c=c_i^{(\delta)}$.

First, we shall show that (\ref{eq2.7}) is true for $i=1$.
We have $$d_{1}-s_j\frac{\delta-\delta_r}{r}-(\delta_r-1)-c_1^{(\delta)}=s_1s-ks_1\frac{\delta-\delta_r}{r}\in W_{1}$$
and
$$d_{1}-s_j\frac{\delta-\delta_r}{r}-(\delta_r-1)-c_{1}^{(\nu)}\notin W_{1}, \quad \nu\neq \delta.$$
By (iv-d), (iv-f-1) and (iv-g-1), for all $i\geqslant 2$ and any $\delta\in\{1,\ldots,s_1\}$, we know that $$d_{1}-c_{i}^{(\delta)}-s_j\frac{\delta-\delta_r}{r}-(\delta_r-1)\notin V_{i-1}^{(2)}\cup V_{i-1}^{(3)}.$$

Second, we shall show that (\ref{eq2.7}) is true for $i\geqslant 2$.

By (iv-b), for $i\geq 2$, we have
\begin{equation}\label{eq2.8}d_{i}-s_j\frac{\delta-\delta_r}{r}-(\delta_r-1)-c_{i}^{(\delta)}=d_{i}-c_{i}^{(1)}-ks_1\frac{\delta-\delta_r}{r}\in W.\end{equation}
Moreover, by (iv-e-1), we have
\begin{equation}\label{eq2.9}d_{i}-s_j\frac{\delta-\delta_r}{r}-(\delta_r-1)-c_{i}^{(\nu)}\notin W, \quad \nu\neq \delta.\end{equation}
By (iv-f-1), we have
\begin{equation}\label{eq2.10}d_{i}-c_{i+1}^{(\delta)}-s_j\frac{\delta-\delta_r}{r}-(\delta_r-1)\notin V_{i}^{(2)}\cup V_{i}^{(3)},\end{equation}
where $\delta\in\{1,\ldots,s_1\}$.

By (iv-c) and (iv-g-1) for all $i\geqslant 2$ and $t\geqslant i+2$, we have
\begin{equation}\label{eq2.11}d_{i}-c_{t}^{(\delta)}-s_j\frac{\delta-\delta_r}{r}-(\delta_r-1)\notin V_{t-1}^{(2)}\cup V_{t-1}^{(3)},\end{equation}
where $\delta\in\{1,\ldots,s_1\}$.

By the construction of $V_{i-1}^{(1)}$ in Step 1, we have
$$\min (W\backslash W_{i-1})\geqslant -2c_{i-1}^{(s_1)}+s_j=-2(c_{i-1}^{(1)}-s_1+1)+s_j=-2c_{i-1}^{(1)}+(2+k)s_1-2+r,$$
thus for all $t\leqslant i-1$, we have
\begin{eqnarray*}
&&d_{i}-c_{t}^{(1)}-s_j\frac{\delta-\delta_r}{r}-(\delta_r-1)\\
&&\leq d_{i}-c_{i-1}^{(1)}-s_j\frac{\delta-\delta_r}{r}-(\delta_r-1)\\
&&< d_{i}-c_{i-1}^{(2)}-s_j\frac{\delta-\delta_r}{r}-(\delta_r-1)\\
&&< \cdots \\
&&< d_{i}-c_{i-1}^{(s_1)}-s_j\frac{\delta-\delta_r}{r}-(\delta_r-1)\\
&&<-2c_{i-1}^{(1)},
\end{eqnarray*}
it follows that for all positive integers $t\leqslant i-1$ and any $\delta\in\{1,\ldots,s_1\}$, we have $$d_{i}-c_{t}^{(\delta)}-s_j\frac{\delta-\delta_r}{r}-(\delta_r-1)\notin W\backslash W_{i-1}.$$
Moreover, by (\ref{iv-h-1}), we have
$$d_{i}-s_j\frac{\delta-\delta_r}{r}-(\delta_r-1)\notin W_{i-1}+C_{i-1}.$$
Hence, \begin{equation}\label{eq2.12}d_{i}-s_j\frac{\delta-\delta_r}{r}-(\delta_r-1)\notin W+C_{i-1}.\end{equation}
By(\ref{eq2.8})-(\ref{eq2.12}), we know that (\ref{eq2.7}) is true for $i\geqslant 2$.

{\bf Case 2.} $ \delta_r\in \left\{2,\dots,r-1\right\}$. For each $i=1,2,\ldots,$
write
\begin{equation}\label{eq2.13}d_{i}-s_j\frac{\delta-\delta_r+r}{r}+(r+1-\delta_r)=c+w, \quad c\in C, w\in W.\end{equation}
We shall prove that $c=c_i^{(\delta)}$.

First, we shall show that (\ref{eq2.13}) is true for $i=1$.
We have $$d_{1}-s_j\frac{\delta-\delta_r+r}{r}+(r+1-\delta_r)-c_1^{(\delta)}=s_1s-ks_1\frac{\delta-\delta_r+r}{r}\in W_{1}$$
and
$$d_{1}-s_j\frac{\delta-\delta_r+r}{r}+(r+1-\delta_r)-c_{1}^{(\nu)}\notin W_{1}, \quad \nu\neq \delta.$$

By (iv-f-2) and (iv-g-2), for all $i\geqslant 2$ and any $\delta\in\{1,\ldots,s_1\}$, we know that $$d_{1}-c_{i}^{(\delta)}-s_j\frac{\delta-\delta_r+r}{r}+(r+1-\delta_r)\notin V_{i-1}^{(2)}.$$

Second, we shall show that (\ref{eq2.13}) is true for $i\geqslant 2$.

By (iv-b) for  $i\geqslant 2$ we have
\begin{equation}\label{eq2.14}d_{i}-s_j\frac{\delta-\delta_r+r}{r}+(r+1-\delta_r)-c_{i}^{(\delta)}=d_{i}-c_{i}^{(1)}-ks_1\frac{\delta-\delta_r+r}{r}\in W. \end{equation}
Moreover, by (iv-e-2), we have
\begin{equation}\label{eq2.15}d_{i}-s_j\frac{\delta-\delta_r+r}{r}+(r+1-\delta_r)-c_{i}^{(\nu)}\notin W, \quad \nu\neq \delta. \end{equation}
By (iv-f-2), we have
\begin{equation}\label{eq2.16}d_{i}-c_{i+1}^{(\delta)}-s_j\frac{\delta-\delta_r+r}{r}+(r+1-\delta_r)\notin V_{i,0}^{(2)},\end{equation}
where $\delta\in\{1,\ldots,s_1\}.$

By (iv-g-2), for all $i\geqslant 2$ and $t\geqslant i+2$, we have
\begin{equation}\label{eq2.16}d_{i}-c_{t}^{(\delta)}-s_j\frac{\delta-\delta_r+r}{r}+(r+1-\delta_r)\notin V_{t-1}^{(2)},\end{equation}
where $\delta\in\{1,\ldots,s_1\}.$

For all $t\leqslant i-1$, we have
\begin{eqnarray*}
&&d_{i}-c_{t}^{(1)}-s_j\frac{\delta-\delta_r+r}{r}+(r+1-\delta_r)\\
&&\leq d_{i}-c_{i-1}^{(1)}-s_j\frac{\delta-\delta_r+r}{r}+(r+1-\delta_r)\\
&&< d_{i}-c_{i-1}^{(2)}-s_j\frac{\delta-\delta_r+r}{r}+(r+1-\delta_r)\\
&&< \cdots \\
&&< d_{i}-c_{i-1}^{(s_1)}-s_1\frac{\delta-\delta_r+r}{r}+(r+1-\delta_r)\\
&&<-2c_{i-1}^{(1)},
\end{eqnarray*}
it follows that for all positive integers $t\leqslant i-1$ and any $\delta\in\{1,\ldots,s_1\}$, we have $$d_{i}-c_{t}^{(\delta)}-s_j\frac{\delta-\delta_r+r}{r}+(r+1-\delta_r)\notin W\backslash W_{i-1}.$$
Moreover, by (\ref{iv-h-2}), we have $$d_{i}-s_j\frac{\delta-\delta_r+r}{r}+(r+1-\delta_r)\notin W_{i-1}+C_{i-1}.$$
Hence,
\begin{equation}\label{eq2.17}d_{i}-s_j\frac{\delta-\delta_r+r}{r}+(r+1-\delta_r)\notin W+C_{i-1}.\end{equation}
By (\ref{eq2.14})-(\ref{eq2.17}), we know that (\ref{eq2.13}) is true for $i\geqslant 2$.

Therefore, $C$ is a minimal complement to $W$.

This completes the proof for the case $s_2\geqslant 2$.

\end{document}